
\documentclass[12pt,journal,onecolumn ]{IEEEtran}

\usepackage{enumerate}

\usepackage{hyperref}
\usepackage[utf8]{inputenc}
\usepackage[margin=1in]{geometry}
\usepackage{graphicx}
\usepackage{amsmath,amssymb,amsthm,authblk,mathrsfs,subfigure}
\usepackage[acronym]{glossaries}
\usepackage{color}
\usepackage{tikz} 
\usepackage[english]{babel}
\usepackage{comment}
\usepackage[nottoc]{tocbibind} 
\usepackage[outdir=./]{epstopdf}
\usepackage{graphicx}
\usepackage{verbatim}
\usepackage{mathtools}
\usepackage{amsthm}
\usepackage{lipsum}

\usepackage{url}
\usepackage{thmtools}
\usepackage{dsfont}
\usepackage{cancel}

\usepackage{setspace}
\usepackage{todonotes}

\newcommand{\be}{\begin{equation}}
\newcommand{\ee}{\end{equation}}

\newcommand {\R}{\mathbb R}

\newcommand {\C}{\mathbb C}

\newcommand{\TK}{\textcolor{magenta}}
\newcommand{\LG}{\textcolor{green!50!black}}

\theoremstyle{plain}

\makeatletter
\newcommand{\sq}[1]{\mathbin{\mathpalette\make@circled{#1}}} 
\newcommand{\make@circled}[2]{%
	\ooalign{$\m@th#1\smallbigcirc{#1}$\cr\hidewidth$\m@th#1#2$\hidewidth\cr}%
}
\newcommand{\smallbigcirc}[1]{%
	\vcenter{\hbox{\scalebox{1.2}{$\m@th#1\square$}}}%
}
\makeatother

\declaretheorem[name={Example},qed={\lower-0.3ex\hbox{$\square$}} ] {Example}

\newcommand{\st}{\, | \,}

\doublespace 

\title{On the gain of entrainment in a class of weakly contractive bilinear control systems
with applications to the   master~equation 
and the ribosome flow model  
}
 \author{Rami Katz and Thomas Kriecherbauer and Lars Gr\"{u}ne 
 and Michael Margaliot\thanks{RK and MM are with the School of Elec. Eng. - Systems, Tel Aviv University, 69978, Israel.    TK and LG  are  with the
Mathematical Institute, University of Bayreuth, Germany.
This research   is partially supported by    research grants from the~DFG (GR 1569/24-1 and KR 1673/7-1,  project number 470999742) and the~ISF. Correspondence: michaelm@tauex.tau.ac.il }}
 
\newtheorem{theorem}{Theorem}

\newtheorem{definition}{Definition}
\newtheorem{corollary}[theorem]{Corollary}

\newtheorem{proposition}[theorem]{Proposition} 
\newtheorem{remark}{Remark}

\newacronym{rfm}{RFM}{Ribosome Flow Model}
\newacronym{pmp}{PMP}{Pontryagin Maximum Principle}
\newacronym{tas}{TASEP}{Totally Asymmetric Simple Exclusion Process}

\newcommand*\diff{\mathop{}\!\mathrm{d}}
\begin{document}
	\maketitle 
	
\begin{abstract}
    We consider a class of bilinear weakly contractive systems 
     that entrain to periodic excitations. Entrainment is important in many natural and artificial processes. For example, in order to function properly
     synchronous generators must entrain to the frequency of the electrical grid, and biological organisms must entrain to the 24h solar day.
    A dynamical system has a positive  gain of 
    entrainment~(GOE) if entrainment also yields a larger output, on average. This property is important in many applications from the periodic 
    operation of bioreactors to the periodic production of proteins during the cell cycle division process. We derive a closed-form formula for the GOE to first-order in the control perturbation. This is used to show that in the class of systems that we consider the GOE is always a 
    higher-order phenomenon. 
    We demonstrate the theoretical results  using  two applications: the  master equation and a model from systems biology called the ribosome flow model, both with time-varying and periodic transition rates. 
\end{abstract}
	\newpage
\begin{IEEEkeywords}
Contractive systems,   
mRNA translation, totally asymmetric simple exclusion process~(TASEP), 
entrainment to periodic excitations, master equation, Markov chains,  Poincar\'e map.
\end{IEEEkeywords}

\section{Introduction}

Nonlinear contractive  systems~\cite{sontag_cotraction_tutorial,LOHMILLER1998683} share many properties with asymptotically stable linear systems.  For example, if the vector field of a contractive system is time-varying and~$T$-periodic then the system admits a unique $T$-periodic solution~$\gamma$ that is exponentially globally asymptotically stable~(EGAS) \cite{entrain_trans}. In particular, 
if the vector field of the contractive system 
is time-invariant    
 then the system admits a unique equilibrium  that is~EGAS.
 In the case when $T$-periodicity of the vector field is driven   by a~$T$-periodic excitation, the convergence to the~$T$-periodic   EGAS solution~$\gamma$ implies that the system \emph{entrains} to the excitation. This property is important in many scientific
 fields
 including: (1)~internal processes in biological organisms  that entrain to the 24h solar day~\cite{entrain_arabid};
 (2)~seasonal outbreaks 
 of epidemics due to periodicity in contact rates~\cite{soper}; 
 (3)~synchronous generators that entrain to the frequency of the electric
 grid; and (4)~brain wave synchronization
 between interacting
 people~\cite{sync_minds2023}. 

 \subsection{Sensitivity of entrainment }
A natural and important question is: what is the sensitivity of~$\gamma$ to small perturbations  in the periodic control?
In other words, if~$u$ and~$u+\Delta u$
are two $T$-periodic controls, differing by a small  (in 
some appropriate sense) perturbation~$\Delta u$,
what can be said about the difference  between  
the corresponding $T$-periodic  
solutions~$\gamma^u$ and~$\gamma^{u+\Delta u}$? Such questions  are important, for example, when designing  synthetic biology 
oscillators    (see, e.g.,
\cite{Hasty2002}). However, addressing these questions rigorously is   a non-trivial task, as typically there is no explicit description of the periodic solutions, that is,
the mapping~$u\mapsto \gamma^u$ is not   explicitly known. Moreover, the perturbation~$\Delta u$ belongs to an infinite-dimensional vector space. Thus, any analysis of the difference between
$\gamma^{u+\Delta u}$ and~$\gamma^u$, for a ``small''~$\Delta u$,
requires using an infinite-dimensional operator
mapping~$\Delta u$ to the ``difference''~$\gamma^{u+\Delta u}-\gamma^{u}$.

Pavlov et al.~\cite{pavlov} considered contractive systems 
(more precisely, the \LG{closely related and slightly more general} class of convergent systems~\cite{RUFFER2013277}) that are excited by  the  output~$\omega(t,\omega_0)$ of a linear harmonic oscillator (where~$\omega_0 $ denotes the initial condition) and showed that there exists a \emph{continuous} mapping~$\alpha$ such that
\[
\gamma^\omega(t)=\alpha(\omega(t,\omega_0)).
\]
Pavlov et al.~\cite{pavlov} refer to this property as the \emph{frequency response}
of the contractive system. This result implies that~$\gamma^u$ is continuous in~$u$. However, in this paper we want to study the difference between $\gamma^u$ and~$\gamma^{u+\Delta u}$ for small $\Delta u$. For this task we do not only need continuity, but also differentiability of $\gamma^u$ with respect to $u$, in a suitable functional analytic sense. The analysis in this paper will provide such a differentiability result and moreover closed-form formulas for the corresponding derivatives.


\subsection{Gain of entrainment }
A closely related topic  is   the \emph{gain of entrainment}~(GOE) problem, that is,   the question of
whether   applying periodic controls may lead to a ``better'' output, on average, than equivalent constant controls.
To explain this, consider the nonlinear   system 
\begin{align*}
\dot x&= f(x,u) ,  \\
y&=h(u,x),
\end{align*}
with state~$x\in\R^n$, input~$u\in\R^m$, and   a scalar output~$y\in\R$.  Here the output represents a quantity that should be maximized.
For example, in the ribosome flow model~(RFM) described in Section~\ref{sec:app2_the_RFM}, $y(t)$ is the protein production rate at time~$t$. 

Assume that for any~$T>0$ and any admissible $T$-periodic control~$v$ the system admits a corresponding $T$-periodic EGAS solution~$\gamma^v$. This property holds in particular  for contractive systems (see Section~\ref{sec:pre} below). 
 Note that this property  implies that for constant controls, $\gamma$ is just an equilibrium point.  
The   average output along the $T$-periodic solution is then 
\[
\bar y(v) :=\frac{1}{T}\int_0^T y(\gamma^v(t),v(t))\diff t.
\]
Let~$\bar v:=\frac{1}{T}\int_0^T v(t)\diff t$. Then for the constant control~$u(t)\equiv \bar v$ the system admits   an
EGAS equilibrium~$e^{\bar v}$, so the corresponding average   output is~$h(e^{\bar v},\bar v )$.  We say that the system admits a~GOE for the  periodic control~$v$ if
\[
         \bar y(v) >  h(e^{\bar v},\bar v) 
\]
In other words, we   consider 
two controls~$u^1(t)=v(t)$ and~$u^2(t)\equiv \bar v$ that have  the same average value, and compare the resulting average outputs,   along the corresponding~EGAS trajectories.     GOE implies that in terms of maximizing the output, 
 the periodic control~$u^1$ is preferable to the ``equivalent'' constant control~$u^2(t)\equiv \overline{v}$ (see Fig.~\ref{fig:goe}).

\begin{figure}
    \centering
    \includegraphics[width=\linewidth]{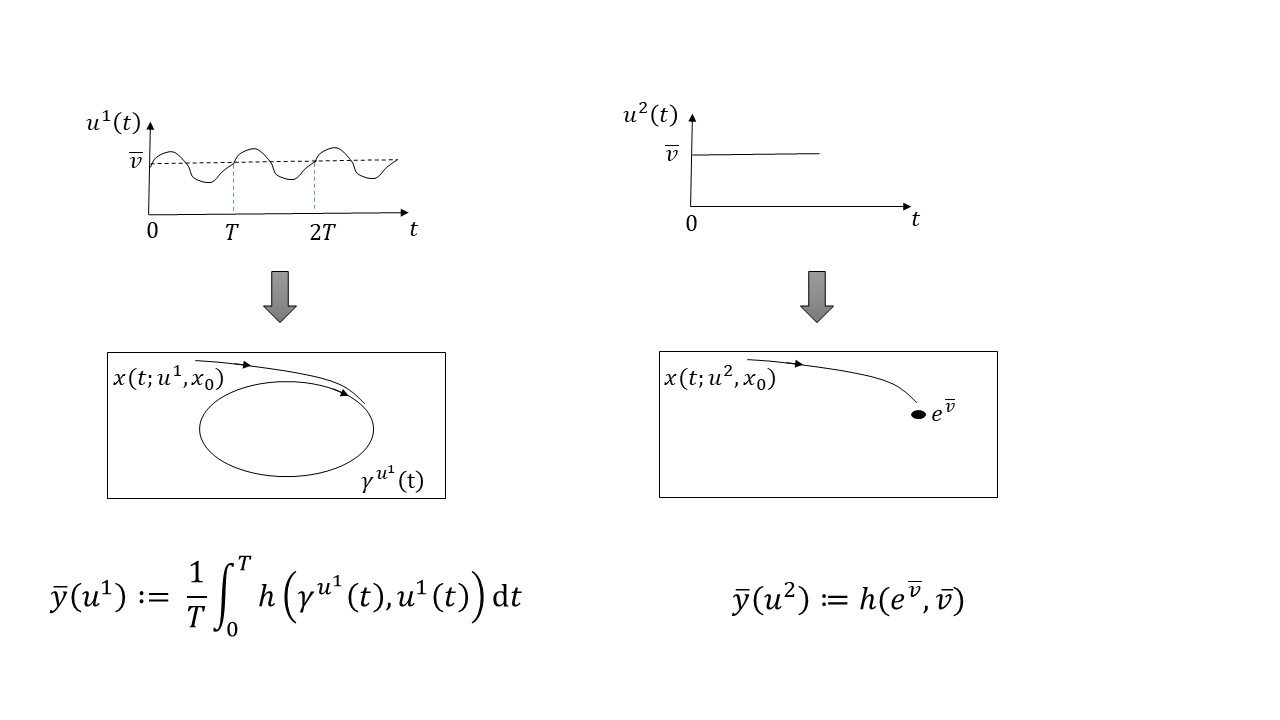}
    \caption{Gain of entrainment (GOE) in a system with output~$y=h(x,u)$. Left: $u^1(t)$ is a $T$-periodic control and~$\gamma^{u^1}(t)$, $t\in [0, T)$, is the corresponding EGAS trajectory. 
    Right: $u^2(t)  \equiv \overline v:=\frac{1}{T}\int_0^T u^1(t)\diff t$ is a constant control and~$e^{\overline v}$ is the corresponding EGAS equilibrium. The system admits a GOE if~$\bar y(u^1)>\bar y(u^2)$.  }
    \label{fig:goe}
\end{figure}

GOE may be  relevant   in numerous applications including   vehicular  control using     
periodic traffic lights, periodic fishery~\cite{peri_fishing}, periodic operation of chemical processes~\cite{Periodic_Chemical_Reactors}, periodic outbreaks of epidemics~\cite{seasonal_epidemics}, periodic gene expression~\cite{entrain_trans}, and more.  All these processes can also be controlled using constant controls, and assessing the GOE can be used to determine if periodic controls are ``better'' than constnt ones.

Asymptotically stable linear time-invariant~(LTI)
systems admit no~GOE.
The next example demonstrates this.
\begin{Example}\label{exa:no_GOE_linear}
Consider the single-input single-output system:
\begin{align*}
\dot x(t)&=Ax(t)+bu(t),\\
y(t)&=c^T x(t),
\end{align*}
where~$A$ is a Hurwitz matrix. 
Let~$G:\C\to\C$ denote the 
transfer function of this system, i.e.,
$G(s):=c^T( s I-A)^{-1}b$. Fix a frequency
$\omega >0$, and consider the 
system response  to two controls
\[
u(t)\equiv \bar v, 
\]
and
\[
u(t)+\Delta u(t)= \bar  v+\sin(\omega  t).
\]
Let~$T:=2\pi / \omega$. For the first   control, the output converges to the $T$-periodic solution
\[
\gamma ^u(t) \equiv G(0) \bar  v,
\]
and for the second control to
\be\label{eq:seconcont}
\gamma ^{u+\Delta u}(t) = G(0) \bar v  + |G(\mathrm{i}\omega) |\sin \bigl (   \omega t  + \arg G(\mathrm{i}\omega ) \bigl ).
\ee
The average values of these outputs are  
$    \frac{1}{T}\int_0^T     \gamma^u(t)
    \diff t =G(0)\bar v,
$ 
and
$    \frac{1}{T}\int_0^T     \gamma^{u+\Delta u}(t)
    \diff t=G(0)\bar v,
$ 
implying no GOE. 
\end{Example}

Non-linear systems may   admit a GOE. The next example from~\cite{pavlov} demonstrates this.
\begin{Example}\label{exa:nonlin_with_goe}
    Consider the non-linear system
    \begin{align}\label{eq:nolonpav}
    \dot x_1&=-x_1+x_2^2,\nonumber\\
    \dot x_2&=-x_2+u,\nonumber\\
    y&=x_1.
    \end{align}
This is the series interconnection of two (scalar)  contractive  systems (see Section~\ref{sec:pre}),  and is thus a  contractive system~\cite{ofir2021sufficient}.
Consider two controls. The first 
is~$u(t)\equiv 0$. For this control the output of~\eqref{eq:nolonpav} converges to the steady-state value~$\gamma^u(t)\equiv 0$.
The second control is~$u +\Delta u$, with~$\Delta u(t): =a\sin(\omega t)$, 
where~$  a,\omega>0$. This control  is~$T$-periodic for~$T:=2\pi/\omega$, and has average~$\frac{1}{T}\int_0^T (u(t)+\Delta u(t)) \diff t = 0$.
A calculation shows that for this control 
the output of~\eqref{eq:nolonpav} converges to a peridoic solution~$\gamma^{u+\Delta u}$ satisfying 
\be\label{eq:yss_exa}
2(4\omega^4+5\omega^2+1) 
\gamma^{u+\Delta u}(t)= 4a^2\omega^2+a^2-2a^2\omega\sin(2t\omega-2\tan^{-1}(\omega))-a^2\cos( 2t\omega-2\tan^{-1}(\omega)),  
\ee
so 
\be\label{eq:quad_goe}
\frac{1}{T}\int_0^T \gamma^{u+\Delta u}
(t) \diff t 
 = \frac{a^2 }{2(1+\omega^2)} .
\ee
Thus, the GOE for this input is always positive and depends on both  the excitation amplitude~$a$ and frequency~$\omega$. Note that the GOE is quadratic in~$a$.
\end{Example}

Examples~\ref{exa:no_GOE_linear}
and~\ref{exa:nonlin_with_goe} 
raise several interesting  questions. For example, given a nonlinear system, how can one determine if it admits a~GOE and for what controls? What control gives the best possible~GOE?
Are there systems that have (or do not have) a GOE for any admissible control? 

Several recent papers analyzed the GOE in a nonlinear
model from systems biology called the ribosome flow model~(RFM). This 
  is a non-linear  ODE model for the flow of ribosomes
  and the production of 
proteins during mRNA translation~\cite{GOE_RFM_2023,max_period_output_RFM}, where the output is the protein production rate. It was shown that, perhaps surprisingly, for several special cases the RFM has no~GOE. In other words, $T$-periodic  controls are not better than constant ones in terms of maximizing the average protein production rate.

GOE may also be studied in the framework of optimal
control theory. The basic idea behind this approach is to pose the problem of finding an admissible control, within the set of $T$-periodic control inputs, which maximizes the average output under: (1)~a constraint on the average of the control,  and (2)~considering the output along the EGAS $T$-periodic solution. However, these constraints make the problem difficult to tackle, and solutions exist only for   special  cases,   e.g.,    for scalar systems~\cite{rapo_convex,rapoport_sing}.

\subsection{Contributions of this paper}
Motivated by the considerations in the previous subsection, in this paper
we rigorously analyze 
 the sensitivity of entrainment and the~GOE in a specific class of bilinear and weakly contractive systems. 
The main contributions of this paper include:
\begin{itemize}
    \item   We analyze the sensitivity function of solutions with respect to a change in the control input. The latter is given as the solution of an ODE on an appropriate infinite-dimensional Banach space. Rather than analyzing  this infinite-dimensional ODE directly, we reduce the analysis to an ODE in $\mathbb{R}^n$ and employ tools from finite-dimensional systems, thereby making the analysis more accessible to a wider audience;
    \item We then show that   the mapping $u\to \gamma^u (0) $ from a periodic control~$u$ to the 
    initial value of the corresponding
    $T$-periodic solution~$\gamma^u$   admits a  Fr{\'e}chet     derivative, and that this derivative is continuous. We derive an explicit expression for the Fr{\'e}chet     derivative. This can be used in conjunction with numerical tools to study various 
    properties of the mapping~$u\to  \gamma^u$. In particular, it provides new formulas that may reduce the numerical work for finding controls that improve the output of the system;
  \item We show that   at constant controls
  this Fr{\'e}chet  derivative is zero, implying that 
  their~GOE is inherently a 
  higher-order phenomenon in the norm  of the control perturbation. We consider this a surprising phenomenon, which is due to the interplay of the first-order terms of the periodic perturbation $\Delta u$ entering the system and the resulting change of the initial condition $\gamma^{u+\Delta u}(0)$ of the corresponding periodic solution, which effectively cancel out in the integrated output; 
 \item    We demonstrate these
results by applying them to two important mathematical models: 
 the master equation for irreducible finite state Markov processes
 and the RFM, 
both with~$T$-periodic transition rates. It should be noted that the master equations obey a weak version of contractiveness. Indeed, the equations are only non-expanding in general, and irreducibility of the underlying Markov-process is required to guarantee  some form of contraction. Therefore our proofs avoid making unnecessary use of contraction. In fact, we found that entrainment for all admissible controls is the basic assumption,
and this must be  supplemented by a non-degeneracy condition for the associated  Poincar\'e map.
\end{itemize}

The remainder of this paper is organized as follows.
The next section   introduces some  notation and several known results that will be used later on. Section~\ref{sec:the_model}
describes the bilinear control system that we analyze. 
Section~\ref{section:main_results} details our main results.  Section~\ref{sec:proofs} includes the proofs of the main results. Sections~\ref{sec:app2_the_RFM} and~\ref{sec:appl1_ME} 
detail  applications of the theoretical results to two models:
the~RFM and time-varying  
Markov chains.
Section~\ref{sec:generalizations}
describes a generalization of our results. 
The final section concludes, and describes   possible topics for further research. 

\section{Preliminaries}\label{sec:pre}
We begin with a short review on contraction in finite-dimensional ODEs. For more details and proofs, see, e.g.,~\cite{sontag_cotraction_tutorial,Sontag2010,bullo_contractive_systems}. 

\subsection{Contraction in finite-dimensional  ODE systems}\label{sec:contraction}
Consider 
the nonlinear dynamical system:
\begin{equation}\label{eq:gene_sys}
\begin{array}{lll}
&\dot{x}(t) =f(x(t),u(t)) ,  \\
&x(0) = x_0\in  \Omega,
\end{array}
\end{equation}
with state~$x(t)\in \Omega \subseteq \R^n$, and
control input~$u(t)=\begin{bmatrix}u_1(t)&\dots&u_m(t)\end{bmatrix}^T\in U\subseteq \mathbb{R}^m$.
  Let~$\mathcal{U}\subseteq Z$ denote the subset of admissible controls, where $Z$ is a Banach space.  We assume that the vector field~$f\in C^1$,  and let
  \begin{align*}
      f_x(x,u):=\frac{\partial}{\partial x} f(x,u) ,\quad f_u(x,u):=\frac{\partial}{\partial u} f(x,u),  
  \end{align*}
denote its Jacobians  with respect to the state and the control, respectively. 

 For~$u\in \mathcal{U}$ and $x_0\in \Omega$, let~$x(t;u,x_0)$ denote the 
 solution of~\eqref{eq:gene_sys} at time $t\geq 0$. We say that the system~\eqref{eq:gene_sys} is   contractive if  there exist~$\eta>0$   (called the contraction rate) and a norm~$|\cdot|:\R^n\to [0,\infty)$ such that for any 
 admissible control~$u$, and  any two initial conditions~$a,b\in\Omega$, we have 
\begin{equation}\label{eq:ContractionProperty}
|x(t;u,a)-x(t;u,b)|\leq \exp (-\eta t)|a-b|, \text{ for all } t\geq 0 .
\end{equation}
In other words, any two trajectories approach each other at an exponential rate,  and this holds uniformly with  respect to the controls. 

Recall that the norm~$|\cdot|:\R^n\to [0,\infty) $ induces a matrix norm~$\|\cdot\|:\R^{n\times n}\to [0,\infty)$ defined by~$
\|A\|:=\max_{|x|=1}|Ax|,
$
and a matrix measure $\mu:\R^{n\times n}\to \R$ (also called logarithmic norm~\cite{strom1975logarithmic}) defined by
\begin{equation}\label{matrix_measure}
 \mu(A):=\lim_{\varepsilon\downarrow 0}\frac{\| I_n+\varepsilon A \| -1}
{\varepsilon}.   
\end{equation}
Assume that the state space~$\Omega$ is forward-invariant, convex and bounded, and that~$U\subseteq \mathbb{R}^m$ is bounded. Then a sufficient condition for  contractivity of~\eqref{eq:gene_sys}
is that   the  matrix measure~$\mu:\R^{n\times n} \to \R$ induced by~$|\cdot|$ satisfies 
\begin{equation}\label{eq:mu1}
     \mu  \left(f_x(x,u)\right) \leq -\eta <0\text{ for all } x\in\Omega, u\in U.
\end{equation}
In both applications described in this paper, namely,  the RFM and time-varying Markov chains, neither~\eqref{eq:mu1} nor \eqref{eq:ContractionProperty} hold uniformly for all of the forward invariant state space. Therefore we do not use these bounds and their powerful consequences like the input-to-state stability (ISS) property~\cite{desoer,Sontag2010}. Instead, we use entrainment to periodic controls
as our main hypothesis. This property    has already been established for the application systems in this paper, see \cite{RFM_entrain,entrainME}.

\subsection{Sensitivity functions, abstract ODEs and evolution equations}\label{subsec:Sensitivity}
 Recall that $x(t;u,x_0)$ is the solution of~\eqref{eq:gene_sys} at time~$t\geq 0$.
Consider the sensitivity function~$\frac{\partial x}{\partial x_0}(t):=\frac{\partial x}{\partial x_0}(t;u,x_0)$.
Formally, $\frac{\partial x }{\partial x_0}(t)$ belongs  to the set of bounded linear 
operators from~$\R^n$ to~$\R^n$  
(and  can thus be considered as an~$n\times n$ matrix),  and describes how a small perturbation in the initial condition $x_0$ translates to a change in the solution value $x(t;u,x_0)$ at time~$t$. It is well-known that $\frac{\partial x}{\partial x_0}:[0,\infty)\rightarrow \mathbb{R}^{n\times n}$ satisfies the variational (matrix) ODE:
\begin{align*}
\frac{d}{dt} \frac{\partial x(t)}{\partial x_0}& = f_x (x(t),u(t))  \frac{\partial x(t )}{\partial x_0},  \\
\frac{\partial x(t)}{\partial x_0}(0)&=I_n.
\end{align*}

Given a Banach space of controls~$Z$ and a set of admissible controls $\mathcal{U}\subseteq Z$, the   sensitivity with respect to controls  at time~$t$ is
denoted by~$\frac{\partial x(t)}{\partial u}$. Then
$
\frac{\partial x(t)}{\partial u}  \in \mathcal{B}(Z,\R^n)$, 
where~$\mathcal{B}(Z,\R^n)$ is the space of bounded linear operators mapping $Z$  to $\mathbb{R}^n$. The operator~$\frac{\partial x(t)}{\partial u}$ describes how a perturbation in the control input translates to a change in the solution value~$x(t;u,x_0)$. Differently from~$\frac{\partial x(t)}{\partial x_0}$, the domain of $\frac{\partial x(t)}{\partial u}$   lies in an \emph{infinite-dimensional} space. Assuming for the moment that~$\frac{\partial x}{\partial u}:[0,\infty)\rightarrow \mathcal{B}(Z,\mathbb{R}^n)$ 
exists and is differentiable (in an appropriate sense) for all~$t\geq 0$, we expect it to satisfy two properties. First,~$\frac{\partial x}{\partial u}(0)=0$, as~$x(0;u,x_0)=x_0$ does not depend on the control. Second,   a formal calculation gives:
\begin{align}\label{eq:SensitivityU}
\frac{d}{dt} \frac{\partial x(t )}{\partial u} =\frac{\partial }{\partial u}  f( x(t ),u(t))= f_x (x(t),u(t))  \frac{\partial x(t )}{\partial u} + 
f_u (x(t),u(t)) .
\end{align}
These considerations suggest the 
abstract ODE (evolution equation) over the Banach space $X=\mathcal{B}(Z,\mathbb{R}^n)$:
\begin{align}\label{eq:abst_c}
\dot \psi(t)&=A(t)\psi(t)+\lambda(t),\nonumber \\
\psi(0)&=0,
\end{align}
where $A(t):= f_x (x(t),u(t))$
are $n\times n$ matrices and $\lambda(t) := f_u (x(t),u(t))$.    In \eqref{eq:abst_c} we interpret~$A(t)$ as a map $\mathcal{B}(Z,\mathbb{R}^n)\ni \psi \mapsto A(t)\psi \in \mathcal{B}(Z,\mathbb{R}^n)$; that is for all $z\in Z$, $\left[A(t)\psi \right](z) = A(t)\psi(z)$. If~$f(x,u)$ satisfies the infinitesimal condition for contraction~\eqref{eq:mu1}
then, by definition, so does~$A(t)$ in~\eqref{eq:abst_c}.

The study of evolution equations of the form \eqref{eq:abst_c} relies on the theory 
of $C_0$ semigroups of bounded linear operators and their generators, see, e.g., \cite{pazy_semigroups}. Yet, 
in this paper we prove existence and uniqueness of a classical solution to \eqref{eq:SensitivityU}, for a class of  weakly 
contractive bilinear control systems, using an alternative approach  that is more tractable and, in particular, involves only finite-dimensional~ODEs.  Indeed, for $\Delta u\in Z$, set $z(t):=\frac{\partial x(t)}{\partial u}\Delta u\in \mathbb{R}^n$. Then $z(t)$ is the solution of the finite-dimensional system $\dot{z}(t)=A(t)z(t)+\lambda(t)\Delta u(t)$ with zero initial condition. However, this is only true if existence of $\frac{\partial x(t)}{\partial u}$ has been established. Below, we will consider the system for $z(t)$ as a starting point and prove that $B(t):\Delta u\mapsto z(t)$ is a bounded linear map, which is a candidate for $\frac{\partial x(t)}{\partial u}$. We then show that 
\begin{equation*}
x(t;u+\Delta u,x_0) = x(t;u ,x_0)+B(t)\Delta u + o(\left\|\Delta u \right\|_Z),
\end{equation*}
which demonstrates that indeed $B(t) =\frac{\partial x(t)}{\partial u}$ is the Fr{\'e}chet derivative. This proof strategy relies only on tools from finite-dimensional systems.
For contraction analysis of
infinite-dimensional systems, see~\cite{SRINIVASAN2023203} 
and the references therein.

\subsection{An implicit function theorem on Banach spaces}
The following implicit function theorem will be central to our analysis (see e.g. \cite[Section VI.2]{edwards2012advanced} and \cite[Section 4]{rheinboldt1969local}):
\begin{theorem}\label{Thm:IFT} [Implicit function theorem on Banach space.]
Let $M$, $N$, and~$G$ be Banach spaces.
Let~$D\subseteq M\times N$ be an open set, and
let~$f:D \to G$ be a continuously Fr{\'e}chet differentiable mapping. 
Assume that 
for some~$(a,b)\in D$, we have~$f(a,b)=0$ and that
the~Fr{\'e}chet derivative~$d_nf(a,b):N\to G$ is a bijection. 
Then there exists a neighborhood $W_a \subseteq M$ of $a$,
a neighborhood $W_b\subseteq N$ of $b$
and a continuously Fr{\'e}chet differentiable mapping $g:W_a\to W_b$ such that
\[
f(x,y)=0 \iff y=g(x),  \text{ for all }  (x,y)\in W_a \times W_b.
\]
\end{theorem}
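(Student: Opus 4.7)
The plan is to reduce the implicit equation $f(x,y)=0$ to a fixed-point equation and apply the Banach contraction principle uniformly in the parameter $x$. Since $L := [d_n f(a,b)]^{-1} : G \to N$ exists as a linear bijection and $d_n f(a,b)$ is bounded, the bounded inverse theorem (a consequence of the open mapping theorem, valid on Banach spaces) guarantees that $L$ is bounded. I would then define
\begin{equation*}
T(x,y) := y - L f(x,y),
\end{equation*}
noting the fundamental equivalence that $y$ is a fixed point of $T(x,\cdot)$ if and only if $f(x,y)=0$. So the problem becomes the existence and uniqueness (and regularity in $x$) of such a fixed point $y=g(x)$ near $b$.

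The core analytic step is to exhibit radii $r>0$ and $\rho>0$ so that for every $x$ in the ball $W_a := B_M(a,\rho)$, the map $T(x,\cdot)$ is a contraction from the closed ball $\overline{W_b} := \overline{B_N(b,r)}$ into itself. For the contraction constant I would compute
\begin{equation*}
d_n T(x,y) = I_N - L\, d_n f(x,y) = L\bigl(d_n f(a,b) - d_n f(x,y)\bigr).
\end{equation*}
Since $f$ is continuously Fréchet differentiable, $d_n f$ is continuous on $D$, so the norm of $d_n T(x,y)$ can be forced below $1/2$ by choosing $r$ and $\rho$ small enough. This bound, together with the mean value inequality on Banach spaces, gives the contraction estimate on $\overline{W_b}$. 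To show $T(x,\cdot)$ maps $\overline{W_b}$ into itself, I would estimate
\begin{equation*}
\|T(x,y) - b\|_N \le \|T(x,y) - T(x,b)\|_N + \|T(x,b) - b\|_N \le \tfrac{1}{2}\|y-b\|_N + \|L\|\,\|f(x,b)\|_G,
\end{equation*}
and shrink $\rho$ further so that the last term is at most $r/2$, using continuity of $f$ at $(a,b)$ and $f(a,b)=0$. The Banach fixed-point theorem then produces a unique $g(x) \in \overline{W_b}$ with $T(x,g(x)) = g(x)$, i.e., $f(x,g(x)) = 0$.

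Continuity of $g$ follows from the standard parameter-dependent contraction estimate; I would write $\|g(x_1)-g(x_2)\|_N \le \|T(x_1,g(x_1)) - T(x_2,g(x_1))\|_N + \tfrac{1}{2}\|g(x_1)-g(x_2)\|_N$ and absorb. For Fréchet differentiability, the candidate derivative is obtained by formally differentiating $f(x,g(x)) \equiv 0$, which suggests
\begin{equation*}
d g(x) = -\bigl[d_n f(x,g(x))\bigr]^{-1} d_m f(x,g(x)).
\end{equation*}
This inverse exists on a (possibly smaller) neighborhood because invertibility is an open condition in the operator norm, using the Neumann series applied to $d_n f(x,g(x)) = d_n f(a,b)[I + L(d_n f(x,g(x)) - d_n f(a,b))]$. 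I would then verify the Fréchet derivative property via the usual $o(\|\Delta x\|_M)$ expansion of $f$ together with the identity $f(x+\Delta x, g(x+\Delta x)) - f(x,g(x)) = 0$. Continuity of $d g$ is inherited from continuity of $d_m f$, $d_n f$, and of matrix inversion in the Banach-algebra sense.

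The main obstacle I anticipate is purely bookkeeping: choosing $r$ and $\rho$ consistently so that (i) the operator norm bound on $d_n T$ holds throughout $W_a \times \overline{W_b}$, (ii) $T(x,\cdot)$ preserves $\overline{W_b}$, and (iii) the linear map $d_n f(x,g(x))$ remains invertible with uniformly bounded inverse. Each of these requires a separate shrinking of the neighborhoods, and one must verify that a single pair $(\rho,r)$ works for all three. No conceptual difficulty arises beyond the continuity of $d_n f$ and the stability of invertibility under small perturbations; the whole argument is essentially Banach fixed-point plus careful neighborhood selection, exactly as in the classical finite-dimensional implicit function theorem.
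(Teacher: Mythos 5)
The paper does not prove this statement at all: it is quoted as a known result, with the proof delegated to the cited references \cite{edwards2012advanced} and \cite{rheinboldt1969local}, so there is no in-paper argument to compare against. Your contraction-mapping proof (bounded inverse theorem for $L=[d_nf(a,b)]^{-1}$, the map $T(x,y)=y-Lf(x,y)$, uniform Banach fixed point, then Neumann-series invertibility to identify and verify $dg(x)=-[d_nf(x,g(x))]^{-1}d_mf(x,g(x))$) is the standard textbook route used in those references and is correct, modulo the routine neighborhood-shrinking bookkeeping you already flag.
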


\section{The model} \label{sec:the_model}
In this section, we
describe 
the 
bilinear control system that we consider, and our standing assumptions.  The control system is of the form
%
\begin{align}\label{eq:SystParam}
\dot{x}(t)& =    f(u(t))+M(x(t)) u(t)  ,   \\
 y(t)&= h(  u(t),x(t)) ,\nonumber
\end{align}
and we denote by~$x(t;u,x_0)$ the solution of~\eqref{eq:SystParam} that satisfies in addition the initial condition~$x(0)=x_0$.
Note that~\eqref{eq:SystParam} is not the  standard affine in the control system, as the ``drift vector'' $f$ depends on~$u$ and not on~$x$. However, as shown in Sections~\ref{sec:app2_the_RFM} and~\ref{sec:appl1_ME}, several important real-world models can be written  in the form~\eqref{eq:SystParam}.
Also, an  extension   of our results to more general models is described in
Section~\ref{sec:generalizations}.

We begin by defining the ambient space~$Z$ for the set of admissible controls~$u(t)
\in \mathbb{R}^m$. 
Given $T>0$, let $Z:=C_{\text{Per}}(0,T)$ denote the space of continuous functions $g:[0,T] \rightarrow \mathbb{R}^m$ satisfying~$g_i(0)=g_i(T)$ for all $i=1,\dots, m$. Then any~$g\in Z$ can be extended to a continuous function on $[0,\infty)$ via periodicity, and it is this extension that is used in the control system \eqref{eq:SystParam}. The set~$Z$, equipped with the norm 
 \begin{equation*}
 \left\|g \right\|_Z : = \max_{i=1,\dots,m} \; \max_{t \in [0,T]} |g_i (t)|,  \end{equation*}
is a Banach space. 
We are now ready to formulate our assumptions on the functions appearing in the control system~\eqref{eq:SystParam} with state space in~$\R^n$ and scalar output~$y$. 
\subsection{General Assumptions}\label{assumptions}
 There exist nonempty, open sets~$\Omega \subset \R^n$ and~$\mathcal{U} \subset Z$
such that conditions~{\bf (C1)} -- {\bf (C3)} hold.
\begin{itemize}
\item[\bf{(C1)}] {\em Regularity.} The maps~$f:\mathbb{R}^m\rightarrow \mathbb{R}^n$, $M: \Omega \to \R^{n\times m}$, and~$h:\mathbb{R}^m\times \Omega \rightarrow \mathbb{R}$ are continuously differentiable.
\item[\bf{(C2)}] {\em Entrainment.} For every~$u\in \mathcal{U}$ 
system~\eqref{eq:SystParam} has a unique~$T$-periodic solution~$\gamma^{u}$ in~$\Omega$. 
\item[\bf{(C3)}] {\em Non-degeneracy.} For every~$u\in \mathcal{U}$ the sensitivity function~$\frac{\partial x} {\partial x_0}(T;u,\gamma^{u}(0))$ at time~$T$ for the periodic solution~$\gamma^u$ does not have~$1$ as an eigenvalue.
\end{itemize}
\subsection{Discussion of the Assumptions and first conclusions}
 We now describe a setting in which our general assumptions are always satisfied.
Suppose we have a  contractive system in the sense that the bound~\eqref{eq:ContractionProperty}
holds 
for all~$a$,~$b$ in a compact and forward invariant subset~$K \subset \mathbb{R}^n$ and
for all admissible controls~$u \in \mathcal{U}$. Let us further assume that the boundary of~$K$ is repelling in the sense that solutions cannot remain in~$\partial K$ for all times~$t\geq 0$ and that a solution that lies in the interior of~$K$ at some time~$t^*$ will stay there for all times~$t > t^*$. Then the interior~$\Omega := \operatorname{Int}(K)$ of~$K$ is also forward  invariant and condition~{\bf (C2)} is satisfied. Indeed, as the Poincar\'e maps~$P^u:K \to K$ defined by~$P^u(x_0) : =  x(T;u,x_0)$ is a contraction  on $K$ there exists for every admissible control~$u$ a unique fixed point~${x}^u\in K$ of~$P^u$. The solution starting in~${x}^u$ at time~$0$ provides the unique periodic solution~$\gamma^u$. The assumption of a repelling boundary as detailed above implies that all of~$\gamma^u$ lies in~$\Omega$. Condition~{\bf (C3)} can be verified  by contradiction. If there exists~$u \in \mathcal{U}$ and an eigenvector~$v\in \mathbb{R}^n \setminus \{0\}$ with~$\frac{\partial x} {\partial x_0}(T;u,\gamma^{u}(0))v=v$, then inequality~\eqref{eq:ContractionProperty} is violated for~$t=T$, $a=\gamma^{u}(0)$, and $b=\gamma^{u}(0)+\varepsilon v$ for~$\varepsilon$ sufficiently small.

 The conditions that we have posed as general assumptions are weaker than the one described in Subsection \ref{sec:contraction}. Our choice of conditions was motivated by the applications described in Sections~\ref{sec:app2_the_RFM} and~\ref{sec:appl1_ME} below. A crucial ingredient for the proofs of our main results, which follows from our general assumptions, is stated in the following proposition.
\begin{proposition}\label{PROP:Replacing_ISS}
Assume conditions~{\bf (C1)} -- {\bf (C3)} hold.  Fix $u_0 \in \mathcal{U}$. 
Then there exist numbers~$d$, $r_x$, $r_{u} >0$ such that for all~$(u, x_0)$ in the open set~$\mathcal{D}_{u_0} := B(u_0, r_u)\times B(\gamma^{u_0}(0), r_x) \subset \Omega \times \mathcal{U}$
the solution~$x(t;u,x_0)$ of system~\eqref{eq:SystParam} with initial condition $x(0)=x_0$ stays within a compact set~$K_{u_0} \subset \Omega$ for all times $t\in [0, T]$. Moreover, for all~$(u, x_0)$, $(\tilde{u}, \tilde{x}_0) \in \mathcal{D}_{u_0}$ and $t\in [0, T]$ we have
    \begin{equation}\label{eq:ISS_replacement2}
|x(t;u,x_0)-x(t;\tilde{u},\tilde{x}_0)| \leq d \left( |x_0-\tilde{x}_0| +  \|u-\tilde{u}\|_Z \right).
\end{equation}
Finally, for constant controls~$u_0(t)\equiv \bar v$ with $u_0 \in \mathcal{U}$, we have that~$\gamma^{u_0}(t)\equiv e^{\bar v}$, that is,  an equilibrium point.
\end{proposition}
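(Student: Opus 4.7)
The plan is to separate the proposition into three essentially independent tasks: (i) build a compact set $K_{u_0} \subset \Omega$ that contains every perturbed trajectory on $[0,T]$, (ii) establish the Lipschitz estimate via a Gronwall argument on that compact set, and (iii) identify the periodic solution associated with a constant control as an equilibrium by invoking the uniqueness clause in (C2).

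For task (i), I start from (C2): the image $\gamma^{u_0}([0,T])$ is a compact subset of the open set $\Omega$, so there exists $\varepsilon > 0$ such that the closed $\varepsilon$-tube $K_{u_0}$ around this image is still contained in $\Omega$. To show that any trajectory starting in $B(\gamma^{u_0}(0), r_x)$ under a control close to $u_0$ stays in $K_{u_0}$ on $[0,T]$, I would invoke standard continuous dependence of ODE solutions on initial data and on the right-hand side. Since $f(u(t))$ and $M(x)u(t)$ depend continuously on $u(t)$ and are $C^1$ in $x$ by (C1), for any tolerance $\delta > 0$ one can choose $r_u, r_x$ small enough so that $|x(t;u,x_0) - \gamma^{u_0}(t)| < \delta$ uniformly on $[0,T]$, as long as the perturbed solution stays in $\Omega$. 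A standard continuation argument, exploiting that the vector field is globally Lipschitz in $x$ on $K_{u_0} \times B(u_0, r_u)$, rules out escape from $K_{u_0}$ and extends existence all the way to $T$. This is the step I expect to be the main obstacle, because it requires balancing $r_x$, $r_u$, and $\varepsilon$ simultaneously and handling the infinite-dimensional control space via its sup norm $\|\cdot\|_Z$.

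Granted task (i), task (ii) is routine. On $K_{u_0}$ and the bounded control neighborhood $B(u_0, r_u) \subset Z$, the map $M$ has a uniform Lipschitz constant $L_M$ and uniform bound $C_M$, the map $f$ has a Lipschitz constant $L_f$ on the pointwise image of $B(u_0, r_u)$ in $\mathbb{R}^m$, and $|\tilde u(t)|$ is bounded by some constant $C_u$. Writing
\[
M(x) u - M(\tilde x)\tilde u = M(x)(u - \tilde u) + (M(x) - M(\tilde x))\tilde u,
\]
the difference $\dot x(t) - \dot{\tilde x}(t)$ is pointwise bounded by $(L_f + C_M)\,|u(t) - \tilde u(t)| + L_M C_u\, |x(t) - \tilde x(t)|$. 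Integrating from $0$ to $t$ and applying Gronwall's inequality produces \eqref{eq:ISS_replacement2} with $d := e^{L_M C_u T}\bigl(1 + T(L_f + C_M)\bigr)$.

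For task (iii), with $u_0(t) \equiv \bar v$ constant the ODE \eqref{eq:SystParam} becomes autonomous. Hence, together with $\gamma^{u_0}$, every time-shifted function $t \mapsto \gamma^{u_0}(t + s)$ is also a $T$-periodic solution taking values in $\Omega$. Uniqueness in (C2) forces these to coincide for every $s$, so $\gamma^{u_0}$ is constant. Setting $e^{\bar v} := \gamma^{u_0}(0)$, the defining equation reduces to $0 = f(\bar v) + M(e^{\bar v})\,\bar v$, exhibiting $e^{\bar v}$ as an equilibrium of the autonomous system.
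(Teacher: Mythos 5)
Your proof is correct. Parts (i) and (ii) follow essentially the paper's own route: the paper likewise fixes a closed $\varepsilon$-tube $K_{u_0}$ around $\gamma^{u_0}([0,T])$, works on the maximal interval on which the perturbed solution stays $\varepsilon$-close, and obtains the quantitative estimate by writing the difference of two trajectories as a linear inhomogeneous system and applying variation of constants — the same Gr\"onwall-type bookkeeping you perform, with the same bootstrapping to conclude that the exit time exceeds $T$, and then the same argument repeated with $(\tilde u,\tilde x_0)$ in place of $(u_0,\gamma^{u_0}(0))$ to get \eqref{eq:ISS_replacement2}. The genuine difference is part (iii). The paper does not use uniqueness there: it sets $Y(t):=\frac{d}{dt}\gamma^{u_0}(t)$, observes that $Y$ solves the variational equation along $\gamma^{u_0}$, so that $Y(0)=Y(T)=\frac{\partial x}{\partial x_0}(T;u_0,\gamma^{u_0}(0))\,Y(0)$, and invokes the non-degeneracy condition~{\bf (C3)} ($1$ is not an eigenvalue of the monodromy matrix) to force $Y\equiv 0$. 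You instead exploit autonomy together with the uniqueness clause of~{\bf (C2)}: every time shift $\gamma^{u_0}(\cdot+s)$ is again a $T$-periodic solution with values in $\Omega$, so uniqueness of the periodic solution (equivalently, of the fixed point of the Poincar\'e-type map) forces $\gamma^{u_0}(s)=\gamma^{u_0}(0)$ for all $s$, and the equilibrium relation $0=f(\bar v)+M(e^{\bar v})\bar v$ follows. Both arguments are legitimate, since both hypotheses are assumed in the proposition; yours is more elementary, while the paper's establishes the slightly stronger fact that non-degeneracy alone (without appealing to global uniqueness of the periodic orbit) already rules out non-trivial periodic solutions for constant controls, and it does so with the same monodromy-matrix machinery that is reused throughout the rest of the paper.
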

\begin{proof}
Choose~$\varepsilon > 0$ such that the closure~$K_{u_0}$ of the~$\varepsilon$-neighborhood~$\Omega_\varepsilon := \cup_{t\in [0,T]} B(\gamma^{u_0}(t),\varepsilon)$ of the compact set~$\gamma^{u_0}([0,T])$ is contained in~$\Omega$. 
For~$x_0 \in B(\gamma^{u_0}(0),\varepsilon)$ and~$u\in B(u_0, 1)$ we consider the difference~$\Delta x(t) := x(t;u,x_0)-\gamma^{u_0}(t)$ for~$t \in [0,T^*)$ with~$T^*=T^*(u, x_0)$ chosen so that~$[0,T^*)$ is the maximal interval for which~$|\Delta x(t)| < \varepsilon$ holds. Let 
\begin{align}\label{eq:Jxu}
   J(x,u) : = \frac{\partial M(x)} {\partial x}u  
\end{align}
denote the Jacobian of the vector field  in~\eqref{eq:SystParam}
with respect to~$x$. Then $\Delta x$ satisfies the linear initial value problem
\begin{align*}
 \frac{d}{dt} \Delta x(t) &= A(t) \Delta x(t) + b(t) ,\\
 \Delta x(0) &= x_0-\gamma^{u_0}(0)
 \end{align*}
 with 
\begin{align*} 
  A(t)&: = \int_0^1 J(\gamma^{u_0}(t) + r \Delta x(t), u_0(t)) \diff r  , \\
  b(t)&: = f(u(t))-f(u_0(t)) + M(x(t;u,x_0)) \bigl(u(t)-u_0(t)\bigl), 
\end{align*}
where we used the fact that~$A(t)\Delta x(t)=\bigl (  M(x(t;u;x_0)) - M(\gamma^{u_0}(t)) \bigl) u_0(t)$ by the fundamental theorem of calculus, and the convexity of the balls~$B(\gamma^{u_0}(t),\varepsilon)$. Condition~{\bf (C1)} together with the compactness of~$K_{u_0}$ implies that there exists a constant~$d' = d'(u_0) >0$ such that $|b(t)| \leq d' \|u-u_0\|_Z$ for all~$u \in B(u_0, 1)$. 
Moreover, for all~$r\in [0,1]$, $t \in [0,T^*)$ the argument~$(\gamma^{u_0}(t) + r \Delta x(t), u_0(t))$ of the continuous map~$J$ in the definition of~$A(t)$ is contained in some compact set~$K_{u_0}\times \hat K_{u_0} \subset \Omega \times \mathbb{R}^m$ that only depends on~$u_0$.
Thus there exists a constant~$\hat d = \hat d(u_0) > 0$ such that the transfer matrix~$\Phi (t;s)$ of the linear time-varying system~$\dot{z}=A(t)z$ satisfies~$\|\Phi(t;s)\| \leq \hat d$ for~$0\leq s \leq t < T^*$. By the variation of constants formula we have
\begin{align*}
    \Delta x (t) = \Phi(t;0) \left ( x_0-\gamma^{u_0}(0) \right ) + \int_0^t \Phi(t;s) b(s) \diff s
\end{align*}
and therefore
\begin{align}\label{eq:auxiliaryISS}
    | \Delta x (t) | \leq \hat d \, |x_0-\gamma^{u_0}(0)| \,+ \, t \hat d d' \,\|u-u_0\|_Z, \text { for all }t\in[0, T^*). 
\end{align}
Choosing~$r_u := \min\{ \varepsilon/(2 T \hat d d'), 1 \} $ and~$r_x:=\min\{ 1/\hat d, 1\}  \varepsilon/2$ ensures that the solution~$x(t;u,x_0)$ cannot leave the neighborhood~$\Omega_\varepsilon$ for~$t \in [0, T]$. In other words~$T^* > T$.

  In order to prove the bound~\eqref{eq:ISS_replacement2} we proceed as above, with the only difference that we replace~$u_0$ by~$\tilde{u}$ and, consequently,~$\gamma^{u_0}(0)$ by~$\tilde{x}_0$ and~$\gamma^{u_0}(t)$ by~$x(t;\tilde{u},\tilde{x}_0)$. Observe that the convexity of the balls~$B(\gamma^{u_0}(t),\varepsilon)$ implies that the interpolations~$r x(t;u,x_0) + (1-r) x(t;\tilde{u},\tilde{x}_0)$, $0\leq r\leq 1$, that appear in the analogue formula for matrix~$A(t)$ above are still contained in the set~$\Omega_\varepsilon \subset \Omega$. One may again derive estimate~\eqref{eq:auxiliaryISS} with constants $d'$, $\hat d$ only depending on $u_0$. Choosing $d:=\hat d \max \{ T d', 1 \} $ yields claim~\eqref{eq:ISS_replacement2}. 

For a constant control~$u_0 \equiv \bar v$ system~\eqref{eq:SystParam} is autonomous and~$Y(t):=\frac{d}{dt} \gamma^{u_0}(t)$ solves the initial value problem
 \begin{align}
\dot Y(t)&= J\bigl (\gamma^{u_0}(t),\bar v \bigl) Y(t), \nonumber\\
Y(0) &= \frac{d}{dt} \gamma^{u_0}(0) . \nonumber
\end{align} 
Observe that the fundamental matrix~$\Phi(t)$ for this system, with~$\Phi(0)=I_n$, equals the sensitivity function~$\frac{\partial x} {\partial x_0}(t;u_0,\bar v)$. Using periodicity of~$\gamma^{u_0}$  gives~$Y(0)=Y(T)=\Phi(T) Y(0)$. Condition~{\bf (C3)} implies that~$Y(0)=0$ because~$1$ is not an eigenvalue of~$\Phi(T)$. Thus~$Y(t)=\Phi(t) Y(0)=0$ for all~$t\in [0, T]$. This shows that~$\gamma^{u_0}(t)$ is a constant function.
\end{proof}

\subsection{The map $\Gamma : u \mapsto \gamma^{u}(0)$ and a Poincar\'e-type map}
According to condition~{\bf (C2)} we can uniquely define $\Gamma:\mathcal{U}\rightarrow \Omega$  by 
\begin{equation}\label{eq:GammaDef}
 \Gamma(u) := \gamma^{u}(0),
\end{equation}
that
maps a~$T$-periodic control
to the initial condition at time zero of the corresponding~$T$-periodic solution. 
It is crucial for our analysis to prove differentiability of the map~$\Gamma$ and to find an expression for its derivative. The key observation to achieve this is that for every~$u \in \mathcal{U}$, we have~$\Gamma(u)=\gamma^u(T)=x(T;u,\Gamma(u))$. This relation characterizes~$\Gamma(u)$,  as we have assumed the uniqueness of the periodic solution for any admissible control. Thus~$\Gamma(u)$ is the unique zero of the Poincar\'e-type map $P:\mathcal{D} \to \R^n$ defined by
\begin{equation}\label{eq:defPoinc}
  P( u,x_0):= x(T;u,x_0)-x_0,
\end{equation}
with $\mathcal{D}:=\bigcup_{u \in \mathcal{U}} \mathcal{D}_u$, 
where the open sets~$\mathcal{D}_u \subset \mathcal{U} \times \Omega$ are defined as in Proposition~\ref{PROP:Replacing_ISS}.

 Let us revisit previous examples where we can read off~$\Gamma(u)$ for some controls $u$ from the explicit solution formulas.
In Example~\ref{exa:no_GOE_linear} above, we find that for the controls~$u,u+\Delta u$ defined there, we have
\begin{equation*}
\Gamma(u) = G(0)\bar v,\qquad
 \Gamma(u+\Delta u) = G(0)\bar v+ \operatorname{Im}(G(\mathrm{i}\omega )).     
\end{equation*}
  Similarly, in Example~\ref{exa:nonlin_with_goe}, we obtain
  \begin{equation*}
   \Gamma(u) = 0,\qquad
 \Gamma(u+\Delta u) =  \frac{a^2\omega ^2 
(2\omega^2+5)}{(\omega^2+1)^2
(4 \omega^2+1)}. 
  \end{equation*}

We conclude this section with a very simple example that satisfies conditions~{\bf (C1)} -- {\bf (C3)}.

\begin{Example}\label{exa:simple_scalar_syst}
    Consider the scalar  equation with state space~$\Omega=\mathbb{R}$
    \begin{align}\label{eq:scalar_sys}
\dot x= - u x,
    \end{align} 
    where~$u(t)$ takes values in~$(c,\infty)$, $c>0$, for all~$t\geq0$.
This is in the form~\eqref{eq:SystParam} with~$n=m=1$,~$f=0$,
and~$M(x)=-x$.
The solution of~\eqref{eq:scalar_sys} is given by
\be\label{eq:expsolsca}
x(t; u , x_0)=\exp(-w(t) ) x_0 , 
\ee
with~$w(t):=\int_0^t u(s)\diff s$. As~$w(T) > 0$, condition~{\bf (C3)} holds. Moreover,
\[
P( u,x_0):= x(T;u,x_0)-x_0= \bigl (\exp(-w(T) ) -1\bigl )x_0 .
\]
yields that the Poincar\'e-type map~$P(u,\cdot)$ has a unique zero~$\Gamma (u)=0$, so 
condition~{\bf (C2)} holds. Finally, in this case we 
can compute the derivatives of~$P$ explicitly,
\begin{equation*}
    \frac{\partial P}{\partial x_{0}} (u, x_0) =\exp(-w(T) ) -1\,,\qquad
     \frac{\partial P}{\partial u} (u, x_0) \Delta u = \exp(-w(T))   x_0 \left (-\int_0^T\Delta u(s) \diff s \right )
\end{equation*}
for~$\Delta u \in Z$ $($recall that~$\frac{\partial P}{\partial u} (u, x_0)$ lies in~$\mathcal{B}(Z,\R))$. The second formula can be derived from 
\begin{align*}
 x(T;u+\Delta u, x_0) -x(T; u , x_0)&=  \left(  \exp(-\int_0^T\Delta u(s) \diff s) -1 \right )  \exp(-w(T))   x_0 \\
 &=\left (-\int_0^T\Delta u(s) \diff s \right )\exp(-w(T))   x_0+o(\|\Delta u\|_Z),
\end{align*}
where $o(\epsilon)$ denotes  the set of functions~$g:\R_+\to\R$ satisfying
$
\lim_{\epsilon\to 0^+} \frac{g(\epsilon)}{\epsilon}=0.
$
\end{Example}

\section{Main results}\label{section:main_results}
This section describes the  main results. All the proofs are placed in the next section.
Throughout the present section we assume that the general assumptions of Section~\ref{assumptions} are satisfied. Moreover, we use freely the notation introduced in Proposition~\ref{PROP:Replacing_ISS} and the definitions provided by \eqref{eq:GammaDef} and \eqref{eq:defPoinc}.

Modifying a control~$u$ to~$u+\Delta u$
generates a change from~$\gamma^u$ to~$\gamma^{u+\Delta u}$.
Since~$\Gamma(u)$ and~$\Gamma(u+\Delta u)$ are the initial condition of these periodic solutions, respectively, it is important to know the derivative of~$\Gamma$. The next result provides an explicit expression for this quantity. 

\begin{theorem}[Explicit expression for the derivative of~$\Gamma$]
\label{thm:Differentiability}
Consider the system~\eqref{eq:SystParam}
. Then the mapping $\Gamma $ in~\eqref{eq:GammaDef}
is continuously Fr{\'e}chet differentiable, and  its  Fr{\'e}chet derivative $d\Gamma(u)\in \mathcal{B}(Z,\mathbb{R}^n)$ is given by  
\begin{align}\label{eq:dgamma}
d\Gamma(u)\Delta u &= -\left  ( \frac{\partial x}{\partial x_0} (T;u,\Gamma(u)) - I_n \right  ) ^{-1}    \frac{\partial x}{\partial u} ( T;u,\Gamma(u))\Delta u\nonumber\\
&=-\left  ( \Phi_{u,\Gamma(u)}(T;0) - I_n \right  ) ^{-1}\int_0^T\Phi_{u,\Gamma(u)}(T;s)\Bigl ( M(\gamma^u(s))+Df(u(s)) \Bigl ) \Delta u(s)\diff s
\end{align}
for all $\Delta u\in Z$. 
Here $\Phi_{u,x_0}(t;t_0)$ is the  
solution at time~$t$ 
of the linear time-varying matrix ODE:
\begin{align}\label{eq:FundEq}
\dot{\Phi}(\tau) &= J\bigl (x(\tau;u,x_0),u(\tau) \bigl)\Phi(\tau),\nonumber\\
\Phi(t_0) &= I_n
\end{align}
with $J$ from \eqref{eq:Jxu}.
\end{theorem}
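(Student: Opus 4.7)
The plan is to apply the implicit function theorem (Theorem~\ref{Thm:IFT}) to the Poincar\'e-type map $P:\mathcal{D}\to\R^n$ defined in~\eqref{eq:defPoinc}. By condition~{\bf (C2)}, for every $u\in \mathcal{U}$ the point $\Gamma(u)$ is the unique zero of $P(u,\cdot)$ in $\Omega$, so $\Gamma$ is locally characterized as the implicit function solving $P(u,x_0)=0$. The invertibility hypothesis needed for the IFT is precisely condition~{\bf (C3)}, since the partial derivative with respect to the initial condition is
\[
\tfrac{\partial P}{\partial x_0}(u,x_0)=\tfrac{\partial x}{\partial x_0}(T;u,x_0)-I_n=\Phi_{u,x_0}(T;0)-I_n,
\]
which is known from standard ODE theory to solve the variational equation~\eqref{eq:FundEq} and is invertible at $(u,\Gamma(u))$ by~{\bf (C3)}.

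The main step, and the main technical obstacle, is to show that $P$ is continuously Fr\'echet differentiable on $\mathcal{D}$, with particular attention to the derivative with respect to $u\in Z$, since $Z$ is infinite dimensional. Following the strategy sketched in Section~\ref{subsec:Sensitivity}, I would introduce as a candidate derivative the bounded linear operator $B(t;u,x_0)\in\mathcal{B}(Z,\R^n)$ defined by $B(t;u,x_0)\Delta u:=z(t)$, where $z$ solves the finite-dimensional inhomogeneous linear ODE
\begin{align*}
\dot z(\tau)&=J(x(\tau;u,x_0),u(\tau))\, z(\tau)+\bigl(M(x(\tau;u,x_0))+Df(u(\tau))\bigr)\Delta u(\tau),\\
z(0)&=0,
\end{align*}
which is exactly what one obtains by formally differentiating $f(u)+M(x)u$ in the control. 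The variation of constants formula shows that $B$ is bounded and linear in $\Delta u$, and gives the explicit representation $B(T;u,\Gamma(u))\Delta u=\int_0^T\Phi_{u,\Gamma(u)}(T;s)\bigl(M(\gamma^u(s))+Df(u(s))\bigr)\Delta u(s)\,ds$.

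To verify that $B(T;u,x_0)$ is indeed the Fr\'echet derivative, I would write $\Delta x(t):=x(t;u+\Delta u,x_0)-x(t;u,x_0)$ and set $\rho(t):=\Delta x(t)-B(t;u,x_0)\Delta u$. Subtracting the ODE for $z$ from the expansion of the ODE for $\Delta x$ and using the $C^1$ regularity in~{\bf (C1)} together with the Lipschitz bound~\eqref{eq:ISS_replacement2} from Proposition~\ref{PROP:Replacing_ISS} (which controls $\|\Delta x(t)\|$ by $\|\Delta u\|_Z$), one obtains that $\rho$ satisfies a linear ODE with a forcing term of order $o(\|\Delta u\|_Z)$; a Gronwall argument on $[0,T]$ then yields $\|\rho(T)\|=o(\|\Delta u\|_Z)$. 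Continuity of $(u,x_0)\mapsto B(T;u,x_0)$ in operator norm follows from continuous dependence of $\Phi_{u,x_0}$, $\gamma^u$, and the coefficients on the data. This is the main obstacle: the $o(\|\Delta u\|_Z)$ estimate must be established uniformly on a neighborhood, and here the uniform bounds guaranteed by Proposition~\ref{PROP:Replacing_ISS} on the compact set $K_{u_0}$ are essential.

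Once continuous Fr\'echet differentiability of $P$ is in hand, Theorem~\ref{Thm:IFT} applied at $(u_0,\Gamma(u_0))$ yields a continuously differentiable local solution $g$ of $P(u,g(u))=0$; uniqueness of the zero from~{\bf (C2)} forces $g=\Gamma$ on the neighborhood, so $\Gamma\in C^1(\mathcal{U},\Omega)$. Differentiating the identity $P(u,\Gamma(u))=0$ by the chain rule gives
\[
\tfrac{\partial P}{\partial u}(u,\Gamma(u))\Delta u+\bigl(\Phi_{u,\Gamma(u)}(T;0)-I_n\bigr)\,d\Gamma(u)\Delta u=0,
\]
which, after inverting the $(\Phi_{u,\Gamma(u)}(T;0)-I_n)$ factor thanks to~{\bf (C3)}, reproduces the first line of~\eqref{eq:dgamma}. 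Substituting the integral representation of $\tfrac{\partial x}{\partial u}(T;u,\Gamma(u))\Delta u=B(T;u,\Gamma(u))\Delta u$ then gives the second line of~\eqref{eq:dgamma}.
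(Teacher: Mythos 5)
Your proposal follows essentially the same route as the paper: establish Fr\'echet differentiability of $x(t;u,x_0)$ in $u$ via the candidate $z$ solving the linearized ODE and an $o(\|\Delta u\|_Z)$ remainder estimate built on Proposition~\ref{PROP:Replacing_ISS}, verify continuity of the partial derivatives, and then apply the implicit function theorem to the Poincar\'e-type map $P$ with invertibility of $\frac{\partial P}{\partial x_0}$ supplied by {\bf (C3)}, finishing with the chain rule on $P(u,\Gamma(u))=0$. The argument is correct and matches the paper's proof in both structure and key estimates, with only cosmetic differences (e.g.\ invoking Gronwall where the paper uses the variation-of-constants bound on the transfer matrix).
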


\begin{Example}
Consider again the system in Example~\ref{exa:simple_scalar_syst}.
For this system, Eq.~\eqref{eq:dgamma} gives
\begin{align*} 
d\Gamma(u)\Delta u &= -\left  ( \frac{\partial x}{\partial x_0} (T;u,\Gamma(u)) - I_n \right  ) ^{-1}    \frac{\partial x}{\partial u} ( T;u,\Gamma(u))\Delta u\nonumber\\
& = -\left  (\exp(-w(T)) -1\right )^{-1} 
\bigl  (-\int_0^T \Delta u(s) \diff s \bigl  )\exp(-w(T))   \Gamma(u)\\
&=0,
\end{align*}
and this is   correct, as in this case~$\Gamma(u)\equiv 0$.
\end{Example}

\begin{remark}
Pavlov et al.~\cite{pavlov}  considered the more general class of convergent systems and showed that for such systems the map~$u\mapsto \gamma^u$ is \emph{continuous}. Theorem~\ref{thm:Differentiability}
shows that for the class of systems we study, and assuming that conditions~{\bf (C1)--(C3)} hold, $\Gamma$ satisfies a stronger regularity condition. Furthermore, as we will see below the explicit expression for~$d\Gamma$  plays a crucial role in the 
analysis of the~GOE. We note that 
Pavlov et al.~\cite{pavlov} 
did not assume condition {\bf (C3)}, but it is possible to 
demonstrate that this condition is necessary to guarantee that Theorem~\ref{thm:Differentiability} holds.
\end{remark}

  The next result analyzes~$d\Gamma (u) $ in the particular case  where~$u$ is a  constant control.
\begin{corollary}\label{coro:fixed_con1}
 Let~$u(t)\equiv \bar  v$ be a constant control in~$\mathcal{U}$.
 Denote by~$e^{\bar v}$ the corresponding equilibrium point
   of the system according to
Proposition~\ref{PROP:Replacing_ISS} so, in particular, 
 $\Gamma(u)=e^{\bar v}$. 
Let~$H:=J(e^{\bar v}, \bar v)$.  Then  
\be\label{eq:dg_const_cont1}
d\Gamma(u)\Delta u =- \left  ( \exp(HT )  - I_n \right  ) ^{-1}    \int_0^T \exp \left ( H(T-s)  \right  ) \left(M(e^{\bar v}) +{  Df(\bar v)}\right)\Delta u(s)\diff s 
\ee
for any~$\Delta u\in Z$. 
\end{corollary}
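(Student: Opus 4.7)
The plan is to derive this as a direct specialization of Theorem~\ref{thm:Differentiability} to the constant control $u(t)\equiv\bar v$, exploiting that in this case the system becomes autonomous along the periodic orbit, which is actually an equilibrium.

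First I would invoke the last statement of Proposition~\ref{PROP:Replacing_ISS}, which guarantees that for a constant admissible control $u\equiv\bar v$ the periodic solution degenerates to an equilibrium, so $\gamma^{u}(t)\equiv e^{\bar v}$ and hence $\Gamma(u)=e^{\bar v}$. Next I would examine the matrix initial value problem~\eqref{eq:FundEq} along this constant trajectory: the matrix $J(x(\tau;u,\Gamma(u)),u(\tau))=J(e^{\bar v},\bar v)=H$ is independent of $\tau$, so the variational equation reduces to the linear time-invariant ODE $\dot\Phi=H\Phi$ with $\Phi(t_0)=I_n$. Its solution is simply $\Phi_{u,\Gamma(u)}(t;t_0)=\exp(H(t-t_0))$. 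In particular $\Phi_{u,\Gamma(u)}(T;0)=\exp(HT)$ and $\Phi_{u,\Gamma(u)}(T;s)=\exp(H(T-s))$.

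Plugging these into the second line of formula~\eqref{eq:dgamma} yields
\begin{equation*}
d\Gamma(u)\Delta u = -\bigl(\exp(HT)-I_n\bigr)^{-1}\int_0^T \exp\bigl(H(T-s)\bigr)\bigl(M(\gamma^u(s))+Df(u(s))\bigr)\Delta u(s)\diff s.
\end{equation*}
Since $\gamma^u(s)\equiv e^{\bar v}$ and $u(s)\equiv\bar v$, we may pull the constant matrix $M(e^{\bar v})+Df(\bar v)$ out of the $s$-dependence, giving precisely~\eqref{eq:dg_const_cont1}. The invertibility of $\exp(HT)-I_n$ needed on the way is ensured by condition~{\bf (C3)}: the sensitivity function $\frac{\partial x}{\partial x_0}(T;u,\Gamma(u))$ coincides here with $\exp(HT)$, and by assumption $1$ is not one of its eigenvalues.

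There is essentially no obstacle beyond bookkeeping; the only point worth verifying carefully is that the specialization of the fundamental matrix is legitimate, that is, that $\Phi_{u,\Gamma(u)}(T;s)=\exp(H(T-s))$ for every $s\in[0,T]$ and not only at $s=0$. This is immediate from the time-invariance of $H$ and the group property of the matrix exponential, so the corollary follows without any further analytic work.
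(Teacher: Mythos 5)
Your proposal is correct and matches the paper's own proof: the paper likewise uses Proposition~\ref{PROP:Replacing_ISS} to identify $\gamma^u\equiv e^{\bar v}$, notes that the transfer matrix from~\eqref{eq:FundEq} then becomes $\Phi_{u,\Gamma(u)}(t;s)=\exp(H(t-s))$, and substitutes this into the formula of Theorem~\ref{thm:Differentiability}. No gaps.
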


The next  result    considers the difference in the outputs along the 
 periodic solutions corresponding  to two controls:~$u$ and~$u+\Delta u$.
\begin{theorem}[A first-order expression for the GOE]\label{thm:GOEGeneral}
Consider the system \eqref{eq:SystParam}. Fix~$u\in \mathcal{U}$.  For all~$u+\Delta u$ with~$(u+\Delta u, \Gamma(u+\Delta u)) \in \mathcal{D}_{u}$
let~$\gamma^u$ and~$\gamma^{u+\Delta u}$ denote the corresponding 
$T$-periodic solutions, and consider 
the   outputs
\begin{align}\label{eq:avgoutput}
y^u(t)&:=h\left(u(t),\gamma^u(t)\right),\nonumber \\
y^{u+\Delta u}(t)&:= h\left(u(t)+\Delta u(t),\gamma^{u+\Delta u}(t)\right).
\end{align}
Let
\[
\overline{y^{u+\Delta u} - y^u } : = 
\frac{1}{T}
\int_0^T \left (  y^{u+\Delta u}(t)- y^u(t) \right ) \diff t
\]
denote  the difference in the average
of these outputs. Then 
\begin{align}\label{eq:FirstOrdTay}
\overline{y^{u+\Delta u} - y^u }  & = \frac{1}{T}\int_0^T \frac{\partial h}{\partial u}(u(t),\gamma^u(t))\Delta u (t) \diff t\nonumber\\
&+\frac{1}{T}\int_0^T\frac{\partial h}{\partial x}(u(t),\gamma^u(t))\left( 
\frac{\partial x}{\partial x_0}(t;u,\Gamma(u))d\Gamma(u)\Delta u+ \frac{\partial x}{\partial u}(t;u,\Gamma(u))\Delta u  \right ) \diff t\nonumber\\&+o(\left\|\Delta u  \right\|_Z).  
\end{align}
\end{theorem}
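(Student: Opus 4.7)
The plan is to Taylor-expand the integrand $y^{u+\Delta u}(t) - y^u(t)$ to first order in $\Delta u$ pointwise in $t$, integrate the expansion over $[0,T]$, and then justify that the pointwise remainders aggregate to an $o(\|\Delta u\|_Z)$ remainder of the integral. The two ingredients needed are: (i) a first-order expansion of the trajectory difference $\gamma^{u+\Delta u}(t) - \gamma^u(t)$, and (ii) a first-order Taylor expansion of $h$ around $(u(t),\gamma^u(t))$.

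For ingredient (i), I would start from the identity
\[
\gamma^{u+\Delta u}(t) - \gamma^u(t) = x(t; u+\Delta u, \Gamma(u+\Delta u)) - x(t; u, \Gamma(u))
\]
and invoke Theorem~\ref{thm:Differentiability}, which yields $\Gamma(u+\Delta u) = \Gamma(u) + d\Gamma(u)\Delta u + o(\|\Delta u\|_Z)$. Joint Fr\'echet differentiability of $(u,x_0)\mapsto x(t;u,x_0)$ at $(u,\Gamma(u))$, established in the course of proving Theorem~\ref{thm:Differentiability} on the basis of the Lipschitz bound~\eqref{eq:ISS_replacement2} from Proposition~\ref{PROP:Replacing_ISS}, then gives, via the chain rule,
\[
\gamma^{u+\Delta u}(t) - \gamma^u(t) = \frac{\partial x}{\partial x_0}(t; u, \Gamma(u))\, d\Gamma(u)\Delta u + \frac{\partial x}{\partial u}(t; u, \Gamma(u))\Delta u + \rho(t, \Delta u),
\]
where $|\rho(t,\Delta u)|$ is $o(\|\Delta u\|_Z)$ uniformly in $t\in[0,T]$.

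For ingredient (ii), condition~{\bf (C1)} ensures that $h\in C^1$, and Proposition~\ref{PROP:Replacing_ISS} together with the estimate on $|\gamma^{u+\Delta u}(t) - \gamma^u(t)|$ from (i) ensures that for $\|\Delta u\|_Z$ sufficiently small all relevant pairs $(u(t)+\Delta u(t),\gamma^{u+\Delta u}(t))$ lie in a fixed compact subset of $\mathbb{R}^m\times\Omega$ on which the partial derivatives of $h$ are uniformly continuous. A first-order Taylor expansion of $h$ then yields
\[
y^{u+\Delta u}(t) - y^u(t) = \frac{\partial h}{\partial u}(u(t),\gamma^u(t))\Delta u(t) + \frac{\partial h}{\partial x}(u(t),\gamma^u(t))\bigl(\gamma^{u+\Delta u}(t) - \gamma^u(t)\bigr) + R(t),
\]
with $|R(t)|\leq \varepsilon(\|\Delta u\|_Z)\bigl(|\Delta u(t)|+|\gamma^{u+\Delta u}(t)-\gamma^u(t)|\bigr)$ and $\varepsilon(\delta)\to 0$ as $\delta\to 0$. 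Since $|\gamma^{u+\Delta u}(t)-\gamma^u(t)|=O(\|\Delta u\|_Z)$ uniformly in $t$ --- a consequence of~\eqref{eq:ISS_replacement2} combined with the local Lipschitz continuity of $\Gamma$ inherited from Theorem~\ref{thm:Differentiability} --- one concludes that $\int_0^T R(t)\,\diff t = o(\|\Delta u\|_Z)$.

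Substituting the expansion from (i) into the Taylor identity of (ii), integrating over $[0,T]$ and dividing by $T$ produces exactly~\eqref{eq:FirstOrdTay}. The main technical hurdle is to guarantee that \emph{every} remainder term is $o(\|\Delta u\|_Z)$ uniformly in $t\in[0,T]$, so that the remainders survive integration. This uniformity rests on the compactness of $[0,T]$ and of the sets in which the solutions and controls take their values (ensured by Proposition~\ref{PROP:Replacing_ISS}), together with the Lipschitz estimate~\eqref{eq:ISS_replacement2} --- exactly the ingredients already deployed in the proof of Theorem~\ref{thm:Differentiability}.
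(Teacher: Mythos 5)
Your proposal is correct and follows essentially the same route as the paper: expand $\Gamma(u+\Delta u)$ via Theorem~\ref{thm:Differentiability}, use the Lipschitz bound~\eqref{eq:ISS_replacement2} from Proposition~\ref{PROP:Replacing_ISS} to get $\gamma^{u+\Delta u}(t)-\gamma^u(t)=O(\|\Delta u\|_Z)$ uniformly in $t$, expand this difference through the state sensitivities (the paper does this via Propositions~\ref{prop:psi_eq},~\ref{prop:con1} and~\ref{prop:SMOTTH_BETA1} together with the uniform transfer-matrix bounds, which is what your ``joint differentiability plus chain rule'' step amounts to), Taylor-expand $h$ with uniformly controlled remainders, and average over $[0,T]$. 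No gaps.
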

 \begin{remark}
Given $u\in \mathcal{U}$, it is often of interest to numerically verify whether a $T$-periodic perturbation $\Delta u$ in the control yields  an increase in  the average output. A straightforward approach is to use a numerical ODE solver to converge to  the
solutions~$\gamma^u$ and~$\gamma^{u+\Delta u}$ and then numerically compute the average of the difference
$y^{\gamma+\Delta \gamma}-y^{\gamma }$. However, if the system dimension and/or  the set of candidate perturbations are large, this may be a costly procedure.

Combining Theorem~\ref{thm:GOEGeneral} 
with 
the  explicit expressions derived 
for~$\frac{\partial x}{\partial x_0}(t;u,\Gamma(u))$, $\frac{\partial x}{\partial u}(t;u,\Gamma(u))$ and $d\Gamma(u)$   yields, via Fubini's theorem, the following kernel representation
\begin{align}\label{eq:KernelRep}
\overline{y^{u+\Delta u} - y^u }  & = \frac{1}{T}\int_0^T \mathcal{K}(t)\Delta u(t) \diff t +o(\left\|\Delta u  \right\|_Z),
\end{align}
where
\begin{align*}
\mathcal{K}(t) &:= \frac{\partial h}{\partial u}(u(t),\gamma^u(t))- \left(\int_0^T \frac{\partial h}{\partial x}(u(s),\gamma^u(s)\Phi_{u,\Gamma(u)}(s;0))\diff s\right)\left( \Phi_{u,\Gamma(u)}(T;0) - I_n \right  ) ^{-1}\nonumber \\
&\times \Phi_{u,\Gamma(u)}(T;t)\Bigl(M(\gamma^u(t))+Df(u(t)) \Bigl)+\left(\int_t^T\frac{\partial h}{\partial x}(u(s),\gamma^u(s))\Phi_{u,\Gamma(u)}(s;t) \diff s\right)\nonumber \\
&\times \Bigl(M(\gamma^u(t))+Df(u(t)) \Bigl)\nonumber .
\end{align*}
The latter suggests an approach for maximizing the GOE (to first order). Indeed, for $\left\|\Delta u \right\|_Z\leq 1$, we have
\begin{align*}
\left|\frac{1}{T}\int_0^T \mathcal{K}(t)\Delta u(t) \diff t \right|\leq \frac{1}{T}\int_0^T \left|\mathcal{K}(t) \right|_1\diff t.
\end{align*}
Thus, to increase the first term on the right-hand side of \eqref{eq:KernelRep},  one can take $\Delta u\in Z$ which approximates $\operatorname{sgn}(\mathcal{K}(t))^{\top}=\left(\operatorname{sgn}(\mathcal{K}_1(t)),\dots, \operatorname{sgn}(\mathcal{K}_m(t)) \right)^{\top}$ on $[0,T]$. Note that the numerical approximation of $\mathcal{K}$ requires computing only $\gamma^u$ and $\Phi_{u,\Gamma(u)}$.

\end{remark}

 The next result uses  Corollary~\ref{coro:fixed_con1}
to analyze the~GOE in the vicinity of a constant control.
\begin{theorem}
[GOE w.r.t.  a constant control
is a  higher-order  phenomenon]
\label{thm:dv}
Consider the system~\eqref{eq:SystParam}. Fix a constant control~$u(t)\equiv \bar v$, and let~$e^{\bar v}$ denote the corresponding equilibrium point.
 For all~$u+\Delta u$ with~$(u+\Delta u, \Gamma(u+\Delta u))$, 
 let~$\gamma^{u+\Delta u}$ denote the corresponding $T$-periodic solution.  Let
\begin{equation}\label{eq:avgoutputConst}
\begin{array}{lll}
&y^u(t):=h(\bar{v},e^{\bar v}), \\
&y^{u+\Delta u}(t):= h\left(u(t)+\Delta u(t),\gamma^{u+\Delta u}(t)\right), \quad t\in [0,T),
\end{array}
\end{equation}
that is, the outputs along the  constant and
$T$-periodic solution, respectively. 
Then for any~$\Delta u$, such that~$\overline{\Delta u}:= \frac{1}{T}\int_0^T \Delta u(t)  \diff t=0$, we have 
\begin{equation}\label{eq:GOE1stZero}
\frac{1}{T}
\int_0^T \left (  y^{u+\Delta u}(t)- y^u(t) \right ) \diff t =o(\|\Delta  u \|_Z).  
\end{equation}
\end{theorem}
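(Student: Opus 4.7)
The plan is to invoke Theorem~\ref{thm:GOEGeneral} for a first-order expansion of $\overline{y^{u+\Delta u}-y^u}$ around the constant control $u\equiv \bar v$, and then to verify that both first-order contributions vanish under the hypothesis $\overline{\Delta u}=0$. The key observation driving the argument is that the linearized trajectory entering the formula is the $T$-periodic solution of a time-\emph{invariant} forced linear system whose generator $H$ is invertible; this forces its average to be zero, which kills the $x$-contribution.

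First I would specialize \eqref{eq:FirstOrdTay} to this setting. Since $\gamma^u(t)\equiv e^{\bar v}$ by Proposition~\ref{PROP:Replacing_ISS}, the row vectors $h_u:=\frac{\partial h}{\partial u}(\bar v,e^{\bar v})$ and $h_x:=\frac{\partial h}{\partial x}(\bar v,e^{\bar v})$ are constant in $t$, and so are $H:=J(e^{\bar v},\bar v)$ and $N:=M(e^{\bar v})+Df(\bar v)$. The first integral in \eqref{eq:FirstOrdTay} thus reduces to $h_u\,\overline{\Delta u}=0$ by hypothesis.

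For the second integral, I would introduce
\[
z(t):=\frac{\partial x}{\partial x_0}(t;u,\Gamma(u))\,d\Gamma(u)\Delta u+\frac{\partial x}{\partial u}(t;u,\Gamma(u))\Delta u,
\]
which is exactly the bracketed expression in the second integrand of \eqref{eq:FirstOrdTay}. Combining the explicit formulas of Corollary~\ref{coro:fixed_con1} with the variational equations of Section~\ref{subsec:Sensitivity}, $z$ solves $\dot z=Hz+N\Delta u(t)$ with $z(0)=d\Gamma(u)\Delta u$; by the very construction of $d\Gamma(u)$ through the Poincar\'e-type map $P$, this initial datum is precisely the one that makes $z(T)=z(0)$, so $z$ is $T$-periodic. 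Integrating $\dot z=Hz+N\Delta u(t)$ over $[0,T]$ and using both periodicity of $z$ and $\overline{\Delta u}=0$ yields $H\int_0^T z(t)\diff t=0$.

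The main (and essentially only) obstacle is to justify the invertibility of $H$, which then upgrades the previous identity to $\int_0^T z(t)\diff t=0$, killing $\frac{1}{T}\int_0^T h_x z(t)\diff t$ and completing the proof. Here condition~\textbf{(C3)} specialized to $u\equiv \bar v$ is decisive: it states that $\Phi_{u,\Gamma(u)}(T;0)=\exp(HT)$ does not admit $1$ as an eigenvalue. Since the spectrum of $\exp(HT)$ consists of exponentials of the eigenvalues of $HT$, the value $0$ is excluded from the spectrum of $H$, so $H$ is invertible. Substituting the two vanishing first-order contributions into \eqref{eq:FirstOrdTay} then yields \eqref{eq:GOE1stZero}.
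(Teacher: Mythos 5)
Your proposal is correct and follows essentially the same route as the paper's proof: specialize Theorem~\ref{thm:GOEGeneral}, combine the two state-sensitivity terms into a single function solving $\dot z=Hz+N\Delta u$, use periodicity and $\overline{\Delta u}=0$ to get $H\int_0^T z(t)\diff t=0$, and invoke condition~\textbf{(C3)} (via $\exp(HT)$ having no eigenvalue $1$) to conclude $H$ is invertible and the integral vanishes. The only cosmetic difference is that you obtain $z(T)=z(0)$ directly from the differentiated Poincar\'e identity, whereas the paper phrases it as periodicity inherited from $W(t;v)=x(t;v,\Gamma(v))$; these are equivalent observations.
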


To explain why this result is important, assume for a moment that
\begin{equation*}
 \frac{1}{T}
\int_0^T \left (  y^{u+\Delta u}(t)- y^u(t) \right ) \diff t =r (\Delta u)+ o(\|\Delta u\|_Z),   
\end{equation*}
where~$r:Z \to \R$ is \emph{linear} in~$\Delta u$, and~$r(\Delta u)\not= 0$. This would imply that      either the perturbation~$\Delta u$
or~$-\Delta u$   always yields  a~GOE. 
Theorem~\ref{thm:dv} states that the linear term~$r(\Delta  u)$ is  zero. 
Thus, the problem of finding a control that yields a GOE, if it exists, is non trivial. 
Furthermore, Theorem~\ref{thm:dv} 
implies that the GOE for constant controls is determined by terms that are at least second-order in~$\Delta u$, suggesting that the study of GOE may be related to some form of ``convexity'' of higher-order operators (see~\cite{katriel2019optimality} for some related considerations).

\section{Proofs}\label{sec:proofs}
We begin with the  proof of Theorem~\ref{thm:Differentiability}. This requires several auxiliary results.
\subsection{Analysis of the partial derivatives~$\frac{\partial x}{\partial u}(t;u,x_0)$ and~$\frac{\partial x}{\partial x_0}(t;u,x_0)$}\label{Sec:partialu}
The partial derivative~$\frac{\partial x}{\partial u}(t;u,x_0)$ is a bounded linear operator and an element of~$\mathcal{B}(Z,\mathbb{R}^n)$. It satisfies an abstract ODE in this space. However, as explained in Section~\ref{subsec:Sensitivity} we begin by considering the time evolution of the directional derivatives~$\frac{\partial x}{\partial u}(t;u,x_0) \phi \in \mathbb{R}^n$ for~$\phi \in Z$ and discuss the question of Fr{\'e}chet differentiability later. The presented analysis is inspired by the discussion on 
``linearizations compute differentials''  in
\cite[Section 2.8]{sontag2013mathematical}.

More precisely, fix~$u_0 \in \mathcal{U}$. We seek to determine~$\frac{\partial x}{\partial u}(t;u,x_0)\phi$ for~$t \in [0, T]$ and~$(u,x_0)$ in the neighborhood~$\mathcal{D}_{u_0}$ of~$(u_0, \gamma^{u_0}(0))$ specified in Proposition~\ref{PROP:Replacing_ISS}. The direction~$\phi$ is represented by~$\Delta u \in Z$ where~$(u+\Delta u, x_0)$ is also required to lie in~$\mathcal{D}_{u_0}$. We consider the difference of the two corresponding solutions
\[
\Delta x(t) := x(t;u+\Delta u,x_0)-x(t;u,x_0).
\]
To simplify the notation, let
$
x(t):=x(t;u,x_0)$, $\tilde u(t):=u(t)+\Delta u(t)$, 
$\tilde x(t):=x(t;\tilde u,x_0)$. Proceeding as in the derivation of~\eqref{eq:ISS_replacement2} in Proposition~\ref{PROP:Replacing_ISS} we obtain 
\begin{align*}
 \frac{d}{dt} \Delta x (t) &= A(t) \Delta x(t) + b(t),\\ \Delta x(0) &= 0,
\end{align*}
 with
 \begin{align*}
   A(t) &= \int_0^1 J(x(t) + r \Delta x(t), u(t)) dr, \\
   b(t) &= f(\tilde u(t))-f(u(t)) + M(\tilde x(t)) \Delta u (t).
\end{align*}
Moreover, Proposition~\ref{PROP:Replacing_ISS} provides a constant~$d>0$ only depending on the choice of $u_0$ such that    
\begin{equation}\label{eq:ISS_specialized}
|\Delta x(t)| \leq  d \|\Delta u\|_Z ,
\text{ for all } t \in [0, T]. 
\end{equation}
It is our task to find, in an appropriate sense, the best approximation $z$ for~$\Delta x$ that depends linearly on~$\Delta u$ for the given pair~$(u, x_0) \in \mathcal{D}_{u_0}$. Rewrite the above initial value problem in the form
\begin{align*}
 \frac{d}{dt} \Delta x (t)&= A_0(t) \Delta x (t)+ b_0(t) + R(t),\\
 \Delta x(0) &= 0,
 \end{align*}
 with 
 \begin{align*}
   A_0(t) & := J(x(t), u(t)), \\
  b_0(t) &:= Df(u(t))\Delta u(t) + M(x(t)) \Delta u (t), \\
  R(t) &:= b(t) - b_0(t) + (A(t)-A_0(t)) \Delta x(t).
\end{align*}
Observe that~$A_0$ does not depend on~$\Delta u$ and that~$b_0$ is a linear function of~$\Delta u$. As we see below~$R$ collects terms that are of higher order in~$\Delta u$. This suggests that the best approximation~$z$ should satisfy the initial value problem
\begin{align*}
 \dot{z}& = A_0(t) z + b_0(t),\\
 z(0) &= 0,
\end{align*}
and this is the same equation that is derived by an abstract approach at the end of Section \ref{subsec:Sensitivity}, although with different notation. Recalling the definition of the transfer matrix~$\Phi_{u,x_0}(t;t_0)$ 
in~\eqref{eq:FundEq} 
the solution $z$ of the initial value problem is given by
\begin{equation}\label{eq:SolutionContSens}
z(t) = \int_0^t\Phi_{u,x_0}(t;s)\Bigl ( M(x(s;u,x_0))+Df(u(s)) \Bigl ) \Delta u(s)\diff s
\end{equation}
and depends linearly on $\Delta u$. Finally, we bound the deviation~$e:=\Delta x - z$. We use that $e$ solves the initial value problem
\begin{align*}
 \dot{e} (t)&= A_0(t) e (t)+ R(t),\\
 e(0) &= 0,
\end{align*}
together with the representation
\begin{align*}
    R(t)&= \left[\int_0^1 Df(u(t) + r\Delta u(t)) - Df(u(t))\diff r\right] \Delta u(t)\\
    &+ \left[\int_0^1 J(x(t) + r \Delta x(t), \Delta u(t)) \diff r\right] \Delta u (t)\\
    &+ \left[\int_0^1 J(x(t) + r \Delta x(t), u(t)) - J(x(t),u(t)) \diff r\right] \Delta x(t).
\end{align*}
For~$0\leq r \leq 1$ we have by construction that~$x(t) + r \Delta x(t)$ is contained in the compact set~$K_{u_0}$ obtained in Proposition~\ref{PROP:Replacing_ISS} and~$u(t) + r \Delta u(t)$ lies in some compact subset of~$\mathbb{R}^m$. Condition~{\bf (C1)} thus implies uniform continuity of~$Df$ and~$J$ and the boundedness of $J$. Together with inequality~\eqref{eq:ISS_specialized} and the linearity of $J$ in the second argument we conclude~$R(t)=o(\|\Delta u\|_Z)$ uniformly for all~$t \in [0,T]$. Since~$e(t) = \int_0^t\Phi_{u,x_0}(t;s) R(s) \diff s$ we 
learn that~$e(t)=o(\|\Delta u\|_Z)$ uniformly for all~$t \in [0,T]$. We summarize our findings.
\begin{proposition}\label{prop:psi_eq}
Fix~$u_0 \in \mathcal{U}$ and denote by~$\mathcal{D}_{u_0}$ the open set in $\mathcal{U}\times\Omega$ specified in Proposition~\ref{PROP:Replacing_ISS}. For any~$(u,x_0) \in \mathcal{D}_{u_0}$ and~$\Delta u\in Z$ with $(u+\Delta u,x_0) \in \mathcal{D}_{u_0}$, we have 
\begin{equation}\label{eq:FrechState}
 x(t;u+\Delta u,x_0) = x(t;u,x_0)+z(t) + o(\left\|\Delta u \right\|_Z), \text{ uniformly over } t\in[0,T],    
\end{equation}
where~$z(t)$, given in~\eqref{eq:SolutionContSens}, is linear in~$\Delta u$. In particular, $x(t;u,x_0)$ is Fr{\'e}chet differentiable with respect to $u$ and $z(t) = \frac{\partial x}{\partial u}(t;u,x_0) \Delta u$. Moreover, $z$ solves the initial value problem
\begin{align}\label{eq:psi_odenew1}
\dot{z}(t)
&= J(x(t;u,x_0),u(t))  z(t) + \Bigl ( M( x(t;u,x_0) )+Df(u(t))\Bigl )\Delta u(t),\\
z(0) &= 0.\nonumber
\end{align}
\end{proposition}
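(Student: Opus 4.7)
The plan is to compare the two solutions $x(t;u+\Delta u,x_0)$ and $x(t;u,x_0)$ directly, extract a linear-in-$\Delta u$ part that will be the candidate for $z$, and control the remainder uniformly on $[0,T]$. To this end, I first set $\Delta x(t) := x(t;u+\Delta u,x_0)-x(t;u,x_0)$ and derive, exactly as in the proof of Proposition~\ref{PROP:Replacing_ISS}, the linear initial value problem $\frac{d}{dt}\Delta x(t)=A(t)\Delta x(t)+b(t)$, $\Delta x(0)=0$, where the matrix $A(t)=\int_0^1 J(x(t;u,x_0)+r\Delta x(t),u(t))\diff r$ comes from the fundamental theorem of calculus applied to $M(\cdot)u(t)$, and $b(t)=f(u(t)+\Delta u(t))-f(u(t))+M(x(t;u+\Delta u,x_0))\Delta u(t)$ collects the contribution of the control perturbation. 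Note that Proposition~\ref{PROP:Replacing_ISS} gives, for all $(u+\Delta u,x_0)\in\mathcal{D}_{u_0}$, the key quantitative bound $|\Delta x(t)|\leq d\|\Delta u\|_Z$ on $[0,T]$ and confinement to a compact $K_{u_0}\subset\Omega$.

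Next, I split $A$ and $b$ into their first-order parts and remainders: $A_0(t):=J(x(t;u,x_0),u(t))$ and $b_0(t):=\bigl(M(x(t;u,x_0))+Df(u(t))\bigl)\Delta u(t)$, so that $\frac{d}{dt}\Delta x=A_0(t)\Delta x+b_0(t)+R(t)$ with $R(t):=b(t)-b_0(t)+(A(t)-A_0(t))\Delta x(t)$. The candidate $z$ is defined as the solution of the truncated linear problem $\dot z=A_0(t)z+b_0(t)$, $z(0)=0$, which yields directly the variation-of-constants formula~\eqref{eq:SolutionContSens} via the transfer matrix $\Phi_{u,x_0}(t;s)$ from~\eqref{eq:FundEq}. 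Linearity of $z$ in $\Delta u$ is immediate from this integral representation, and~\eqref{eq:psi_odenew1} is nothing but the definition of $z$.

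The main obstacle, and the only genuinely nontrivial step, is to show that the error $e:=\Delta x-z$, which solves $\dot e=A_0(t)e+R(t)$, $e(0)=0$, satisfies $e(t)=o(\|\Delta u\|_Z)$ uniformly in $t\in[0,T]$. I will rewrite $R$ as the sum of three telescoping integrals, $\int_0^1 (Df(u+r\Delta u)-Df(u))\diff r\,\Delta u$, $\int_0^1 J(x+r\Delta x,\Delta u)\diff r\,\Delta u$, and $\int_0^1 (J(x+r\Delta x,u)-J(x,u))\diff r\,\Delta x$. On the compact set $K_{u_0}$ together with a compact neighborhood of $u_0$ in $\mathbb{R}^m$, condition~{\bf (C1)} furnishes uniform continuity of $Df$ and $J$, and the linearity of $J$ in its second slot together with $|\Delta x|\leq d\|\Delta u\|_Z$ controls the middle term. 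Combining these three estimates yields $R(t)=o(\|\Delta u\|_Z)$ uniformly on $[0,T]$.

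Finally, since $\|\Phi_{u,x_0}(t;s)\|$ is uniformly bounded on $0\leq s\leq t\leq T$, the representation $e(t)=\int_0^t \Phi_{u,x_0}(t;s)R(s)\diff s$ propagates the $o(\|\Delta u\|_Z)$ bound on $R$ to $e$, uniformly in $t$. This establishes~\eqref{eq:FrechState}. Because $\Delta u\mapsto z(t)$ is clearly a bounded linear operator from $Z$ to $\mathbb{R}^n$ (boundedness follows from the uniform bound on $\Phi_{u,x_0}$ and continuity of $M$ and $Df$), the decomposition $x(t;u+\Delta u,x_0)-x(t;u,x_0)=z(t)+o(\|\Delta u\|_Z)$ is exactly the definition of Fr{\'e}chet differentiability of $u\mapsto x(t;u,x_0)$ at $u$, with derivative $\frac{\partial x}{\partial u}(t;u,x_0)\Delta u=z(t)$, completing the proof.
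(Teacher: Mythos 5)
Your proposal is correct and follows essentially the same route as the paper: you form $\Delta x$, split its linear ODE into the first-order part $A_0,b_0$ (whose solution is $z$ via variation of constants) plus a remainder $R$, and show $R=o(\|\Delta u\|_Z)$ uniformly using the bound $|\Delta x(t)|\le d\|\Delta u\|_Z$ from Proposition~\ref{PROP:Replacing_ISS}, uniform continuity of $Df$ and $J$ on the compact sets guaranteed by {\bf (C1)}, linearity of $J$ in its second argument, and the uniform bound on the transfer matrices. This matches the paper's argument step for step, so no further comment is needed.
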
 

The next result describes the continuity of $\frac{\partial x}{\partial u}(t;u.x_0)$ as a function of the initial condition $x_0$ and control $u$. 
\begin{proposition}\label{prop:con1}
 Fix~$u_0 \in \mathcal{U}$ and denote by~$\mathcal{D}_{u_0}$ the open set in $\mathcal{U}\times\Omega$ specified in Proposition~\ref{PROP:Replacing_ISS}.
The mapping~$\mathcal{D}_{u_0} \ni  (u,x_0) \mapsto \frac{\partial x}{\partial u}(t;u,x_0) \in \mathcal{B}(Z,\mathbb{R}^n)$ is continuous, uniformly in~$t\in[0,T]$, i.e., for any~$(u,x_0)\in \mathcal{D}_{u_0}$, we have 
\begin{equation*}
\lim_{\substack{
\tilde u \to u \\ 
\tilde x_0\to x_0}} \left(\sup_{t\in [0,T]}\left\|\frac{\partial x}{\partial u}(t;\tilde  u ,\tilde x_0)-\frac{\partial x}{\partial u}(t;u,x_0) \right\|_{\mathcal{B}(Z,\mathbb{R}^n)}\right) = 0.
\end{equation*}
\end{proposition}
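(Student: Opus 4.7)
The plan is to exploit the explicit representation from~\eqref{eq:SolutionContSens},
\begin{equation*}
\frac{\partial x}{\partial u}(t;u,x_0)\Delta u = \int_0^t \Phi_{u,x_0}(t;s)\Bigl(M(x(s;u,x_0))+Df(u(s))\Bigr)\Delta u(s)\,\diff s,
\end{equation*}
and reduce the continuity assertion to showing that both the transfer matrix $\Phi_{u,x_0}(t;s)$ and the matrix-valued integrand $M(x(s;u,x_0))+Df(u(s))$ depend continuously on $(u,x_0)$ uniformly in $s,t\in[0,T]$. Once this is established, the operator-norm bound follows directly by estimating the absolute value of the integral by the $L^\infty$-norm of the matrix-valued factor times $\|\Delta u\|_Z$ and taking the supremum over unit-norm $\Delta u$.

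First, I would invoke Proposition~\ref{PROP:Replacing_ISS} to confine both solutions $x(\cdot;u,x_0)$ and $x(\cdot;\tilde u,\tilde x_0)$ to the compact set $K_{u_0}\subset\Omega$ and to obtain the bound $\sup_{t\in[0,T]}|x(t;\tilde u,\tilde x_0)-x(t;u,x_0)|\leq d(|\tilde x_0-x_0|+\|\tilde u-u\|_Z)$. Combined with condition~{\bf (C1)} (continuity of $M$ and $Df$) and the uniform continuity of these maps on the compact set $K_{u_0}$, respectively on a compact ball of $\mathbb{R}^m$ containing the ranges of the admissible controls under consideration, this yields
\begin{equation*}
\sup_{s\in[0,T]}\bigl|\bigl(M(x(s;\tilde u,\tilde x_0))+Df(\tilde u(s))\bigr)-\bigl(M(x(s;u,x_0))+Df(u(s))\bigr)\bigr|\longrightarrow 0
\end{equation*}
as $(\tilde u,\tilde x_0)\to(u,x_0)$ within $\mathcal{D}_{u_0}$.

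Second, I would analyze the difference $\Psi(t;s):=\Phi_{\tilde u,\tilde x_0}(t;s)-\Phi_{u,x_0}(t;s)$, which satisfies $\Psi(s;s)=0$ and
\begin{equation*}
\dot\Psi(t;s)=J(x(t;u,x_0),u(t))\,\Psi(t;s)+\bigl[J(x(t;\tilde u,\tilde x_0),\tilde u(t))-J(x(t;u,x_0),u(t))\bigr]\Phi_{\tilde u,\tilde x_0}(t;s).
\end{equation*}
Since $J$ is continuous and the arguments stay in a compact set, $J$ is bounded along both reference trajectories, so Gronwall's inequality applied to the defining matrix ODE~\eqref{eq:FundEq} furnishes a uniform bound $\|\Phi_{\tilde u,\tilde x_0}(t;s)\|\leq C$ for all $0\leq s\leq t\leq T$ and all $(\tilde u,\tilde x_0)\in\mathcal{D}_{u_0}$. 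Uniform continuity of $J$ on the relevant compact set, together with the ISS-type estimate from step one, gives that the inhomogeneity $J(x(t;\tilde u,\tilde x_0),\tilde u(t))-J(x(t;u,x_0),u(t))$ has sup-norm tending to zero. A second application of variation of constants and Gronwall to the ODE for $\Psi$ then produces
\begin{equation*}
\sup_{0\leq s\leq t\leq T}\|\Phi_{\tilde u,\tilde x_0}(t;s)-\Phi_{u,x_0}(t;s)\|\longrightarrow 0.
\end{equation*}

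Finally, I would combine both estimates via the triangle inequality inside the integral defining $\frac{\partial x}{\partial u}$. For any $\Delta u$ with $\|\Delta u\|_Z\leq 1$, the difference $\left(\frac{\partial x}{\partial u}(t;\tilde u,\tilde x_0)-\frac{\partial x}{\partial u}(t;u,x_0)\right)\Delta u$ splits into two integrals: one controlled by $T\cdot C$ times the sup-norm of the integrand difference from step one, the other by $T$ times the sup-norm of $\Phi_{\tilde u,\tilde x_0}-\Phi_{u,x_0}$ from step two, times a uniform bound on $M(x(\cdot;u,x_0))+Df(u(\cdot))$. Taking the supremum over $\|\Delta u\|_Z\leq 1$ and then over $t\in[0,T]$ gives the claimed uniform operator-norm convergence. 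The main obstacle is the second step, namely pinning down that $\Phi$ varies continuously in $(u,x_0)$ uniformly in $(s,t)$; once this Gronwall-based estimate is in hand, the rest of the proof is essentially bookkeeping.
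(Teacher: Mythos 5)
Your proposal is correct and follows essentially the same route as the paper's proof: it uses the explicit representation~\eqref{eq:SolutionContSens}, splits the difference into a term coming from the integrand $M(x(s;u,x_0))+Df(u(s))$ and a term coming from the transfer matrices, handles the first via Proposition~\ref{PROP:Replacing_ISS}, compactness and uniform continuity of $M$, $Df$, and handles the second by writing the ODE for the difference of transfer matrices, using a uniform bound on $\|\Phi_{\tilde u,\tilde x_0}(t;s)\|$ and uniform continuity of $J$ on a compact set, exactly as in the paper. The only differences (which $J$ serves as the homogeneous part in the ODE for the difference, and which $\Phi$ multiplies which factor in the splitting) are immaterial regroupings of the same argument.
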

\begin{proof}

Fix~$(u, x_0) \in \mathcal{D}_{u_0}$.
Let~$\Delta x_0 \in\mathbb{R}^n$ and~$\Delta u\in Z$ be small enough such that
~$\tilde x_0:= x_0+\Delta x_0$ and~$\tilde u: = u+\Delta u$ satisfy 
$(\tilde u,\tilde x_0) \in \mathcal{D}_{u_0}$. 
To simplify the notation, set
\begin{align*}
 x(t):=x(t;u,x_0),\quad
\tilde x(t):=x(t;\tilde u,\tilde x_0 ).
\end{align*} 
Let $\phi\in Z$ with $\left\| \phi\right\|_Z=1$
. Recalling \eqref{eq:FundEq} and \eqref{eq:SolutionContSens}, we define  for all~$t\in [0,T]$
\begin{align*}
n(t)&:=\frac{\partial x}{\partial u} (t;u,x_0)\phi = \int_0^t\Phi_{u,x_0}(t;s)\Bigl ( M(x(s))+Df(u(s)) \Bigl ) \phi(s)\text{d}s,\\
\tilde n(t)&:=\frac{\partial x}{\partial u} (t;\tilde u, \tilde x_0)\phi=\int_0^t\Phi_{\tilde u,\tilde x_0}(t;s)
\Bigl ( 
M(\tilde x(s))+Df(\tilde u(s)) \Bigl )
\phi(s)\text{d}s.
\end{align*}
Then,
$
n(t)-\tilde{n}(t)=  I_1(t)+ I_2(t),
$
where
\begin{align*}
    I_1(t)&:= \int_0^t\Phi_{u,x_0}(t;s) 
\Bigl ( 
M(x(s))+Df(u(s))-M(\tilde{x}(s))-Df(\tilde{u}(s)) 
\Bigl ) 
\phi(s)\diff s,\\
    I_2(t)&:= \int_0^t
\left ( 
\Phi_{u,x_0}(t;s) -\Phi_{\tilde{u},\tilde{x}_0}(t;s)
\right ) 
\Bigl ( M(\tilde x(s))+Df(\tilde u(s)) \Bigl ) 
\phi(s)\diff s.
\end{align*}
We bound each of these  integrals separately.
 Note that~$x(s),\tilde{x}(s)
$ are contained in the compact set~$K_{u_0}$ of Proposition~\ref{PROP:Replacing_ISS} 
and~$u(s),\tilde{u}(s)
$ lie in some compact subset~$\hat K_{u_0}$ of~$\mathbb{R}^m$ for all~$s\in [0,T]$. As in the proof of Proposition~\ref{PROP:Replacing_ISS} this implies the existence of a constant~$\hat d = \hat d(u_0) >0$ such that for all~$(\tilde u,\tilde x_0) \in \mathcal{D}_{u_0}$ and~$0 \leq s \leq t \leq T$ we have~$\| \Phi_{\tilde{u},\tilde{x}_0}(t;s)\|\leq \hat d$. 

Consider~$I_1$. Condition~{\bf{(C1)}} implies uniform continuity of~$M$ on~$K_{u_0}$ and of~$Df$ on~$\hat K_{u_0}$
Taking into account that~$\left\|\phi \right\|_Z=1$ together with inequality~\eqref{eq:ISS_replacement2} 
we conclude that~$|I_1 (t)| \rightarrow 0$ uniformly for~$t \in [0, T]$ as~$(\Delta u,\Delta x_0)\rightarrow (0,0)$.

To bound~$I_2$, 
it is enough to bound
\[
\Delta  \Phi(t;s):=\Phi_{u,x_0}(t;s)-\Phi_{\tilde u,\tilde x_0}(t;s),\quad  0\leq s\leq t\leq T. 
\]
Note that~$\Delta  \Phi(s;s)=I-I=0$.
By~\eqref{eq:FundEq},
\begin{align*}
    \frac{d}{dt} \Delta \Phi(t;s) &= J(\tilde x(t),\tilde u(t)) \Delta \Phi(t;s)+
  \Bigl   (
  J( x(t),u(t) )-J(\tilde x(t),\tilde u(t) )
  \Bigl ) 
  \Phi_{u,x_0}(t;s).
\end{align*}
This implies the following representation for~$\Delta \Phi(t;s)$:
\begin{align*}
\Delta \Phi(t;s)= \int_s^t \Phi_{\tilde u,\tilde x_0}(t;\tau)
\Bigl   (
J(x(\tau ),u(\tau))- J(\tilde x(\tau),\tilde u(\tau) )
\Bigr )
\Phi_{u,x_0}(\tau;s)
\diff \tau.
\end{align*}
Using the uniform continuity of~$J$ on the compact set~$K_{u_0} \times \hat K_{u_0}$, together with 
the uniform bound~$\hat d$ on the matrix norms of the transfer matrices~$\Phi_{u,x_0}$ and~$\Phi_{\tilde u,\tilde x_0}$  we conclude that also~$|I_2 (t)| \rightarrow 0$ uniformly for~$t \in [0, T]$ as~$(\Delta u,\Delta x_0)\rightarrow (0,0)$.
This completes the proof of Proposition \ref{prop:con1}.
\end{proof}

The next result analyzes~$ {Y}(t):=\frac{\partial x}{\partial x_0}(t;u,x_0)$. Recall that this sensitivity function is the solution at time~$t\geq 0 $
of the linear time-varying ODE:
\begin{align*}
\dot Y(t) &= J(x(t;u,x_0),u(t))Y(t), \\
Y(0) &= I_n,
\end{align*}
that is, $\frac{\partial x}{\partial x_0}(t;u,x_0)=\Phi_{u,x_0}(t;0)$. The continuous dependence of this quantity on~$(u, x_0)\in \mathcal{D}_{u_0}$
uniformly in~$t\in[0,T]$ was shown as part of the proof of Proposition \ref{prop:con1}. We thus have
\begin{proposition}\label{prop:SMOTTH_BETA1}
  The mapping $\mathcal{D}_{u_0} \ni$  $(u,x_0) \mapsto \frac{\partial x}{\partial x_0}(t;u,x_0)$ is continuous, uniformly in~$t\in[0,T]$.
\end{proposition}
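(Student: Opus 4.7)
The plan is to recognize that this proposition is an almost immediate consequence of work already carried out inside the proof of Proposition~\ref{prop:con1}. By the characterization of the sensitivity function $Y(t)=\frac{\partial x}{\partial x_0}(t;u,x_0)$ as the unique solution of the variational matrix initial value problem with $Y(0)=I_n$, we have the identification $\frac{\partial x}{\partial x_0}(t;u,x_0)=\Phi_{u,x_0}(t;0)$ stated just before the proposition. Thus the claim reduces to
\begin{equation*}
\lim_{(\tilde u,\tilde x_0)\to (u,x_0)} \sup_{t\in[0,T]} \left\|\Phi_{u,x_0}(t;0)-\Phi_{\tilde u,\tilde x_0}(t;0)\right\|=0.
\end{equation*}

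First I would fix $u_0\in \mathcal{U}$ and $(u,x_0)\in \mathcal{D}_{u_0}$, and restrict attention to neighboring pairs $(\tilde u,\tilde x_0)\in \mathcal{D}_{u_0}$. The key step is to invoke the integral representation
\begin{equation*}
\Delta\Phi(t;0)=\int_0^t \Phi_{\tilde u,\tilde x_0}(t;\tau)\bigl(J(x(\tau),u(\tau))-J(\tilde x(\tau),\tilde u(\tau))\bigr)\Phi_{u,x_0}(\tau;0)\,\diff\tau,
\end{equation*}
which was derived during the treatment of $I_2$ in the proof of Proposition~\ref{prop:con1}. Three ingredients, all already in hand, then supply the uniform bound: (i)~the norm bound $\|\Phi_{\tilde u,\tilde x_0}(t;\tau)\|\leq \hat d$, valid uniformly for $0\leq \tau\leq t\leq T$ and $(\tilde u,\tilde x_0)\in\mathcal{D}_{u_0}$, extracted from Proposition~\ref{PROP:Replacing_ISS}; (ii)~the uniform continuity of $J$ on the compact set $K_{u_0}\times \hat K_{u_0}$, which follows from condition~\textbf{(C1)}; and (iii)~the continuity of the solution map $(u,x_0)\mapsto x(\cdot;u,x_0)$ in the supremum norm on $[0,T]$ provided by \eqref{eq:ISS_replacement2}. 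Ingredients~(ii) and~(iii) force the integrand to be arbitrarily small in operator norm, uniformly in $\tau\in[0,T]$, whenever $(\tilde u,\tilde x_0)$ is sufficiently close to $(u,x_0)$, while ingredient~(i) keeps the outer transfer matrix controlled.

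I do not anticipate any genuine obstacle, since all machinery was assembled while proving Proposition~\ref{prop:con1}; the present proposition is really a corollary recording what happens when one fixes the second argument of $\Phi$ at $s=0$. Accordingly, the cleanest write-up is a one-line reduction to the bound on $\Delta\Phi(t;s)$ obtained there, specialized to $s=0$, rather than a re-derivation.
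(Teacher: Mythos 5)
Your proposal is correct and matches the paper's argument: the paper likewise disposes of this proposition by noting that the continuity of $(u,x_0)\mapsto \Phi_{u,x_0}(t;0)=\frac{\partial x}{\partial x_0}(t;u,x_0)$, uniformly in $t\in[0,T]$, was already established via the bound on $\Delta\Phi(t;s)$ in the proof of Proposition~\ref{prop:con1}, using exactly the three ingredients you list. Nothing further is needed.
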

\subsection{Proof of Theorem~\ref{thm:Differentiability}  and of Corollary~\ref{coro:fixed_con1}}\label{Sec:PfThm2}
We can now prove  Theorem \ref{thm:Differentiability}.
Recall the   Poincar\'e-type map~$P:\mathcal{D} \to \R^n$  in \eqref{eq:defPoinc}.
 It follows from Propositions~\ref{prop:con1} and~\ref{prop:SMOTTH_BETA1} and from the definition of the domain~$\mathcal{D}$ that~$P$ is continuously differentiable
and its     Fr{\'e}chet derivative at $(u, x_0)$ is given by \begin{equation}\label{eq:addedinp1}
 \begin{bmatrix}
     \Delta u\\\Delta {x}_0  \end{bmatrix}
     \mapsto \begin{bmatrix}
 \frac{\partial x}{\partial u}(T;u, x_0)    & \frac{\partial x}{\partial x_0}(T;u,x_0)-I_n 
\end{bmatrix} \begin{bmatrix}
    \Delta u\\\Delta {x}_0 
\end{bmatrix}.
\end{equation}
Now, recall that~$P(u,x_0)=0$   iff~$x(t;u,x_0)=\gamma^u(t)$, i.e.,  
  the $T$-periodic 
  trajectory corresponding to~$u$, i.e., iff~$x_0=\gamma^u(0)=\Gamma(u)$.

Let $u\in \mathcal{U}$ and $x_0 = \Gamma(u)=\gamma^{u}(0)$. Then, $P(u,x_0)=0$. 
 Condition~{\bf (C3)} says that no eigenvalue of~$\frac{\partial x}{\partial x_0}(T;u,x_0)$ equals~$1$. Therefore
\begin{align*}    
\frac{\partial P}{\partial x_0}(u,\Gamma(u))= \frac{\partial x}{\partial x_0}(T;u,x_0) -I_n,
\end{align*}
is non-singular and the implicit function theorem can be applied to analyze the zero-set of~$P$. Indeed, by
Theorem~\ref{Thm:IFT}, there  exist a neighborhood~$W_{x_0}$ of~$x_0$, a neighborhood~$W_u$ of~$u$, and a continuously Fr{\'e}chet differentiable mapping~$g:W_u \to W_{x_0}$ such that for every~$x_1\in W_{x_0}$ and~$v\in W_u$, we have that 
\begin{equation}\label{eq:pvv1}
P(v,x_1)=0 \ \iff x_1=g(v).
\end{equation}
We conclude by uniqueness that $\Gamma\vert_{W_u}=g$, which shows that $\Gamma:u\mapsto \gamma^u(0)$ is continuously Fr{\'e}chet differentiable.
Now 
differentiating the identity
\[
P(v,\Gamma(v))=0 
\]
with respect to the control~$v$ and using the chain rule and~\eqref{eq:addedinp1} 
gives
\begin{equation*}
  \frac{\partial x}{\partial u} ( T;v,\Gamma(v))+
\left ( \frac{\partial x}{\partial x_0} (T;v,\Gamma(v)) - I_n \right )   d\Gamma(v)    = 0 ,
\end{equation*}
so for any $\Delta v\in Z$  we obtain with the help of Proposition~\ref{prop:psi_eq}
\begin{align}  \label{eq:dgz}
d\Gamma(v)\Delta v &= -\left  ( \frac{\partial x}{\partial x_0} (T;v,\Gamma(v)) - I_n \right  ) ^{-1}    \frac{\partial x}{\partial u} ( T;v,\Gamma(v))\Delta v \nonumber\\
&= -\left  ( \Phi_{v,\Gamma(v)}(T;0) - I_n \right  ) ^{-1}\int_0^T\Phi_{v,\Gamma(v)}(T;s)\Bigl ( M(\gamma^v(s))+Df(v(s)) \Bigl ) \Delta v(s)\text{d}s .
\end{align}
This completes the proof of Theorem~\ref{thm:Differentiability}. 
 In the case of constant controls~$u(t)\equiv \bar v$ we know from Proposition~\ref{PROP:Replacing_ISS} that~$x(t;u,\Gamma(u))\equiv e^{\bar v}$. The definitions of~$H$ in the statement of Corollary~\ref{coro:fixed_con1} and of the transfer matrix in~\eqref{eq:FundEq} imply~$\Phi_{u,\Gamma(u)}(t;s) = \exp ( H(t-s) )$. This proves Corollary~\ref{coro:fixed_con1}.
\subsection{Proof of Theorem~\ref{thm:GOEGeneral}}
 For~$t\in [0,T]$ introduce~$\Delta \gamma (t):= \gamma^{u+\Delta u}(t)-\gamma^u(t)$. Differentiability of~$\Gamma$ yields
\begin{align*}
 \Gamma(u+\Delta u)-\Gamma(u) = d\Gamma(u)\Delta u + o(\left\|\Delta u\right\|_Z )  = O(\left\|\Delta u\right\|_Z )
\end{align*}
The bound~\eqref{eq:ISS_replacement2} of Proposition~\ref{PROP:Replacing_ISS} then implies that~$\Delta \gamma (t) = O(\left\|\Delta u\right\|_Z )$ uniformly in~$t\in [0,T]$. The differentiablity of the output function~$h$   gives
\begin{align}\label{eq:rep_Deltah}
    &h(u(t)+\Delta u(t), \gamma^u(t)+\Delta \gamma(t)) - h(u(t), \gamma^u(t)) \nonumber \\ &\qquad = \frac{\partial h}{\partial u}(u(t),\gamma^u(t))\Delta u(t) + \frac{\partial h}{\partial x}(u(t),\gamma^u(t)) \Delta \gamma (t) + o(\left\|\Delta u\right\|_Z )
\end{align}
uniformly in~$t\in [0,T]$. Finally, since~$\Delta \gamma (t)=x\bigl (t;u+\Delta u,\Gamma(u+\Delta u)\bigr)-x\bigl (t;u,\Gamma(u)\bigr)$ we learn from Propositions~\ref{prop:con1} and~\ref{prop:SMOTTH_BETA1} and from 
the uniform bounds on the transfer matrices that
\begin{align*}
\Delta \gamma(t) = \frac{\partial x}{\partial x_0}(t;u,\Gamma(u))d\Gamma(u)\Delta u+ \frac{\partial x}{\partial u}(t;u,\Gamma(u))\Delta u + o(\left\|\Delta u\right\|_Z )
\end{align*}
uniformly in~$t\in [0,T]$. The claim of Theorem~\ref{thm:GOEGeneral} follows by taking the time average of~\eqref{eq:rep_Deltah}.

\subsection{Proof of Theorem \ref{thm:dv}}
Since~$u(t)\equiv \bar v$ is constant, we have $\gamma^u(t)\equiv e^{\bar v}$. 
Theorem~\ref{thm:GOEGeneral} and the  assumption
that~$\overline{\Delta u}=0$ yield
\begin{align}\label{eq:tobseum}
\frac{1}{T}\int_0^T   \left (  y^{u+\Delta u}(t)- y^u(t) \right ) \diff t &= 
  \frac{\partial h}{\partial x}( \bar v,e^{\bar v})\left ( 
\frac{1}{T}\int_0^T\frac{\partial x}{\partial x_0}(t;u,e^{\bar v}) \diff t
\right )  
d\Gamma(u)\Delta u \nonumber \\&+\frac{\partial h}{\partial x}(\bar v,e^{\bar v})\left ( \frac{1}{T}\int_0^T \frac{\partial x}{\partial u}(t;u,e^{\bar v} )\Delta u \diff t\right) +o(\left\|\Delta u \right\|_Z).  
\end{align}
 We combine the first and second terms on the right-hand side of 
equation~\eqref{eq:tobseum}   in the form
\begin{align*}
&\frac{\partial h}{\partial x}( \bar v,e^{\bar v})  \left ( 
\frac{1}{T}\int_0^T w(t) \diff t \right) 
\end{align*}
with
\begin{align*}
 \qquad \qquad \qquad w(t)&:=  \frac{\partial x}{\partial x_0}(t;u,e^{\bar v}) d\Gamma(u)\Delta u + \frac{\partial x}{\partial u}(t;u,e^{\bar v} )\Delta u.
\end{align*}
Using~$\frac{\partial x}{\partial x_0}(t;u,e^{\bar v})=\Phi_{u,e^{\bar v}}(t;0)$ and Proposition~\ref{prop:psi_eq} we deduce that $w$ solves the linear equation
\begin{align}\label{eq:diffeqforw}
    \dot w=Hw+\left(M(e^{\bar v})+Df(\bar{v})\right)\Delta u(t),
\end{align}
where we denote~$H:=J(e^{\bar v},\bar v)$ as in Corollary~\ref{coro:fixed_con1}.
Observe that we may write~$w(t)=\frac{\partial W}{\partial v}(t; u)\Delta u$ with $W(t;v):=x(t;v, \Gamma(v))$. The~$T$-periodicity of~$W(\cdot\,; v)$ is inherited by $w$ and we have $\int_0^T\dot w(t) \diff t =0$. As it is assumed that $\overline{\Delta u}=0$ we obtain~$H (\int_0^T w(t) \diff t) =0$ by integrating~\eqref{eq:diffeqforw} over one period. The relation~$\frac{\partial x}{\partial x_0}(T;u,e^{\bar v})=\Phi_{u,e^{\bar v}}(T;0)=\exp (HT)$ together with condition~{\bf (C3)} yield that~$0$ cannot be an eigenvalue of~$H$ and that~$H$ is non-singular. This implies~$\int_0^T w(t) \diff t =0$. Thus the sum of the second and third term on the right hand side of equation~\eqref{eq:tobseum} vanishes, too. This completes the proof of Theorem~\ref{thm:dv}.

\begin{remark}\label{remark:const_con}
In Theorem \ref{thm:dv} we assume that the average of the $T$-periodic perturbation satisfies~$\overline{\Delta u}=0$ in order to guarantee  that~$\overline  u= \overline{u+\Delta u}$. However, if we allow perturbations whose average is not necessarily
zero then  the above proof
can be used to determine the perturbation~$\Delta u$ that optimizes, to first order in~$\Delta u$, the average difference between the periodic outputs corresponding to~$u$ and to~$u+\Delta u$. 
Indeed, in this case we learn from equations~\eqref{eq:tobseum} and~\eqref{eq:diffeqforw} that
\begin{align}\label{eq:Texplici}
 \frac{1}{T}\int_0^T \left (  y^{u+\Delta u}(t)- y^u(t) \right ) \diff t &=
 \left (
 \frac{\partial h}{\partial u}( \bar v,e^{\bar v}) - \frac{\partial h}{\partial x}( \bar v,e^{\bar v})
 H^{-1} \left(M(e^{\bar v})+Df(\bar{v})\right )
 \right )
 \overline{\Delta u} \\&+ o(\left\|\Delta u \right\|_Z).\nonumber 
\end{align}
It is interesting to note that,
to first order in~$\Delta u$,
the maximization only depends on the average~$\overline{\Delta u}\in \mathbb{R}^m$, so 
one can always restrict the search to a constant control perturbation  rather than a more general (non-trivial) $T$-periodic control perturbation. This reduces the infinite-dimensional optimization 
problem to a finite-dimensional one. 
 Moreover, 
the optimal~$\overline{\Delta u}$ is just a scaling of the vector 
\[
\left ( \frac{\partial h}{\partial u}( \bar v,e^{\bar v})
 -
 \frac{\partial h}{\partial x}(\bar{v},e^{\bar v})  
  H^{-1} \left(M(e^{\bar v})+Df(\bar{v})\right ) \right)^{\top}.
\]
\end{remark}

\begin{remark}
We assume throughout an~$n$-dimensional state vector and a scalar output~$y(t)=h(u(t),x(t))$.
However, the results can be easily used to study the average difference
\[
T^{-1}\int_0^T ( \gamma^{u+\Delta u}(t)-\gamma^u(t))\diff t.
\]
Indeed, for any~$i\in\{1,\dots,n\}$ setting~$h(u(t),x(t))=x_i(t)$
implies that~$y^u(t)= h(u(t),\gamma^u(t)) = \gamma_i^u(t) $, so for example~\eqref{eq:Texplici} 
gives
\begin{align*}
T^{-1} 
\int_0^T \left (  \gamma_i^{u+\Delta u}(t)- \gamma_i^u(t) \right ) \diff t & = 
 -
 (\alpha^i)^T  
  H^{-1}\left(M(e^{\bar v})+Df(\bar{v})\right)\overline{\Delta u}
  +o(\left\|\Delta u \right\|_Z)  , 
\end{align*}
where~$\alpha^i$ is the~$i$th vector in the standard basis of~$\R^n$.

\end{remark}

\section{Application I: the Ribosome Flow Model}\label{sec:app2_the_RFM}
The ribosome 
flow model~(RFM)
is a phenomenological  
model for the flow of ``particles'' along a one-dimensional ``traffic lane''
that includes~$n$ sites.
The~RFM is the dynamical mean-field approximation of an important model from statistical mechanics called the totally asymmetric simple exclusion process~(TASEP). 
TASEP includes a  1D chain of sites
and particles hop stochastically  
along this chain in a uni-directional manner.
Each site can be either empty or include a particle, and simple exclusion means that a particle cannot hop to a site that is already  occupied. This model has attracted enormous interest, as it is one of the simplest models where phase-transitions appear and can be addressed rigorously, see e.g.,~\cite{solvers_guide,kriecherbauer_krug2010,TASEP_book}. 

The RFM (and its variants)
has been extensively used to model and analyze the flow of ribosomes along the mRNA molecule~\cite{reuveni2011genome,  margaliot2012stability,RFM_SENSE,EYAL_RFMD1,Aditi_abortions,randon_rfm,rfm_feedback,rfm_max,rfmr_2015,RFM_FROM_PDE,alexander2017}, and interconnected RFM networks have been used
to model and study  large-scale mRNA translation in the cell~\cite{Raveh2016, aditi_networks, fierce_compete,HALTER2017267,Ortho_RFM,nani}.  

The RFM is a non-linear model that includes $n$ state-variables. 
The state-variable~$x_i(t)$, $i\in [n]:=\{1,\dots,n\}$, describes  the  density of particles  at site~$i$ at time~$t$. The density is normalized so that~$x_i(t)\in[0,1]$ for all~$t$, and thus
the state space of the RFM is ${K} 
= [0,1]^n$. The flow from site~$i$ to site~$i+1$ at time~$t$
is given by
\[
u_i(t) x_i(t)(1-x_{i+1}(t)),
\]
where~$u_i(t)>0$ is the transition rate from site~$i$ to site~$i=1$, and~$1-x_{i+1}(t)$ is the ``free space'' at site~$i+1$ at time~$t$. In other words, as the occupancy of a given site grows, the flow into this site decreases. This is a ``soft'' version of the simple  exclusion principle in~TASEP. 
The~RFM can be used to 
 model and analyze the evolution of particle
 ``traffic jams'' along the   chain. 

Formally, the RFM is given by the state-equations: 
\begin{equation}\label{eq:ndimrfm}
    \dot x_
    i =u_{i-1}(t) x_{i-1} (
    1-x_{i})-u_{i}(t) x_{i} (
    1-x_{i+1}),\quad i\in [n],
\end{equation}
with~$x_0(t):=1$ 
and~$x_{n+1}(t):=0$.
In other words, the RFM is fed by a reservoir  of particles that is always full, and feeds a reservoir that is always empty.

The output rate from the last site in the~RFM
is
\begin{equation}\label{eq:RFMOutput}
y(t)=h(u(t),x(t)):=u_n(t)x_n(t).    
\end{equation}

When modeling  ribosome  flow, $y(t)$
is the flow of ribosomes exiting the mRNA molecule at time~$t$, 
and thus the protein production rate at time~$t$. Note that we can write the RFM~\eqref{eq:ndimrfm} in the  form~\eqref{eq:SystParam} with $x(t) = \begin{bmatrix} x_1(t)&\dots& x_n(t)\end{bmatrix} ^{T}$, $u(t) =\begin{bmatrix} u_0(t)&\dots& u_n(t)\end{bmatrix}^{T}$, $f(u)\equiv 0$,
and 
\begin{align*}
& M(x)= \begin{bmatrix}
    1-x_1 & -x_1(1-x_2)&0&0&\dots&0&0\\
 0 & x_1(1-x_2)&-x_2(1-x_2)&0&\dots&0&0\\
 \\
&& \vdots\\
 0 & 0&0&0&\dots&x_{n-1}(1-x_n)&-x_n
\end{bmatrix}.
\end{align*}
Let the set $\mathcal{U}$ of admissible controls be given by
\begin{align*}
    \mathcal{U} :  = \left\{u \in Z \st  \  c<u_i(t) \ \text{ for all } t\in [0,T],\ i=1,\dots,m \right\}
\end{align*}
with~$m=n+1$, 
and~$0<c$.
It is known~\cite{RFM_entrain}
that for any~$u\in \mathcal{U}$ the state space~${K}
=[0,1]^n$ is invariant, and its
boundary~{$\partial K$}
is repelling. 
The Jacobian  of the vector field  of the~RFM becomes singular on some points on~{$\partial K$,}
and thus the RFM is not contractive  on~${K}$
w.r.t. any norm. However, for any convex and compact set~$\Omega'\subseteq {\Omega :=}(0,1)^n$, there exists a norm~$\left|\cdot\right|$, that depends on~$\Omega'$,  with associated matrix measure~$\mu$, and a scalar~$\eta=\eta(\Omega')>0$
such that
\begin{equation}\label{eq:ContractionRFM}
\mu\left(J(x,u)\right)\leq-\eta<0 \text{ for all } x\in\Omega',\ u\in [c,C]^{n+1}   
\end{equation}
 for any fixed $C>c$. The latter implies that the RFM is \emph{contractive after a short transient}~\cite{cast_book}, i.e.,
   for any initial condition~$x_0\in [0,1]^n$ and any $T$-periodic control~$u\in \mathcal{U}$ there exists a~$\tau=\tau(u,x_0)\geq 0$ and a compact convex set~$\Omega' = \Omega'(u,x_0) \subseteq (0,1)^n$
such that~$x(t;u,x_0)\in\Omega'$ for all~$t\geq\tau$. In particular, 
for any~$u\in \mathcal U$ the RFM admits a unique~$T$-periodic solution~$\gamma^u$ and~$x(t; u,x_0)$ converges to~$\gamma^u$ 
for any~$x_0\in {K}$. 
There  are biological findings suggesting that  gene expression in the cell  entrains to the periodic cell-cycle program~\cite{peri_cell_cycle,Frenkel2012,Higareda2010,patil2012}.

Note that for every $u\in \mathcal{U}$ the unique~$T$-periodic solution~$\gamma^u$ is contained in some convex and compact set~$\Omega'\subseteq (0,1)^n$, where inequality~\eqref{eq:ContractionRFM} holds. Thus~$\frac{\partial x}{\partial x_0} (T;u,\Gamma(u))=\Phi_{u, \Gamma(u)}(T)$ is a contraction w.r.t.~the corresponding norm and cannot have an eigenvalue equal to~$1$. 

The discussion in  the two previous paragraphs shows that  conditions~{\bf (C1)-(C3)}   in Section~\ref{assumptions} hold for the~RFM. 
Theorem~\ref{thm:dv} now implies
\begin{corollary}
Consider the RFM \eqref{eq:ndimrfm} with output \eqref{eq:RFMOutput}. Fix a constant control~$u(t)\equiv \bar v$, and let~$e^{\bar{ v}}$ denote the corresponding equilibrium point. 
For all~$\Delta u \in Z$ such that~$\overline{\Delta u}=0$ and~$u+\Delta u\in \mathcal{U}$,      let~$\gamma^{u+\Delta u}$ denote the corresponding $T$-periodic solution. Then the average outputs along the $T$-periodic 
solutions $y^u(t) \equiv h(\bar{v},e^{\bar v})$ and $y^{u+\Delta u}(t) = h(u(t)+\Delta u(t),{\gamma}^{u+\Delta u}(t))$ satisfy~\eqref{eq:GOE1stZero}.
\end{corollary}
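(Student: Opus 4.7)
The plan is to obtain the corollary as a direct application of Theorem~\ref{thm:dv}, so the task reduces to verifying that the RFM satisfies the three general conditions \textbf{(C1)}--\textbf{(C3)} of Section~\ref{assumptions} with the state space $\Omega = (0,1)^n$ and the admissible set $\mathcal{U}$ as defined above. Most of the needed facts have already been collected in the surrounding discussion, so the proof would essentially be a matter of assembling them.

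First, I would check \textbf{(C1)}: in the form \eqref{eq:SystParam} one has $f \equiv 0$ and $M(x)$ with polynomial entries in $x$, while $h(u,x) = u_n x_n$ is multilinear; all of these are clearly $C^1$ on their respective domains, so regularity is immediate. Second, for \textbf{(C2)} I would invoke the ``contractive after a short transient'' property of the RFM recalled just before the corollary: for every $u \in \mathcal{U}$ any trajectory eventually enters a convex compact subset $\Omega' \subset (0,1)^n$ on which the matrix measure bound \eqref{eq:ContractionRFM} holds, and this gives existence and uniqueness of a $T$-periodic solution $\gamma^u$ lying entirely in some such $\Omega'$, which in particular sits in $\Omega$.

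Third, for \textbf{(C3)} I would argue by contradiction, essentially as in the abstract discussion following the general assumptions. Fix $u \in \mathcal{U}$ and the corresponding $\Omega'$ containing $\gamma^u([0,T])$; on $\Omega'$ the norm $|\cdot|$ associated with \eqref{eq:ContractionRFM} yields, via Coppel's inequality applied to the fundamental matrix equation \eqref{eq:FundEq}, that $\|\Phi_{u,\Gamma(u)}(T;0)\| \leq \exp(-\eta T) < 1$ in this norm. Hence $\Phi_{u,\Gamma(u)}(T;0)$ is a strict contraction and in particular cannot admit $1$ as an eigenvalue, which is exactly \textbf{(C3)}. One slight subtlety is that the norm and contraction rate depend on $\Omega'$ rather than being uniform over $\mathcal{U}$, but \textbf{(C3)} only requires the spectral condition for each fixed $u$ separately, so this causes no problem.

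Once \textbf{(C1)}--\textbf{(C3)} are in place, the constant control $u \equiv \bar v \in \mathcal{U}$ and the zero-mean perturbation $\Delta u$ satisfy the hypotheses of Theorem~\ref{thm:dv} verbatim (with $y^u(t) \equiv h(\bar v, e^{\bar v})$ and $y^{u+\Delta u}$ as in \eqref{eq:avgoutputConst}), and the conclusion \eqref{eq:GOE1stZero} follows directly. I do not anticipate a genuine obstacle here; the only thing that requires a little care is making sure the localization to a compact set $\Omega' \subset (0,1)^n$ on which uniform contractivity holds is sufficient to invoke \textbf{(C3)}, and the argument above handles that cleanly.
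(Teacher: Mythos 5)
Your proposal is correct and follows essentially the same route as the paper: verify \textbf{(C1)}--\textbf{(C3)} for the RFM using regularity, the entrainment result from~\cite{RFM_entrain} together with the ``contraction after a short transient'' property, and the fact that~$\gamma^u$ lies in a convex compact set~$\Omega'\subset(0,1)^n$ on which~\eqref{eq:ContractionRFM} holds, so that~$\Phi_{u,\Gamma(u)}(T;0)$ is a strict contraction in the~$\Omega'$-dependent norm and cannot have~$1$ as an eigenvalue, and then apply Theorem~\ref{thm:dv}. Your explicit use of Coppel's inequality is just a spelled-out version of the paper's observation that the fundamental matrix is a contraction with respect to the corresponding norm, and your handling of the non-uniformity of the norm and contraction rate matches the paper's localization argument.
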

In other words, to first-order the GOE in the RFM is zero. This is perhaps surprising, as one may expect that by properly coordinating the periodic transition rates along the RFM, it may be possible to increase the average output (even to first-order).

\section{Application II: the master equation }\label{sec:appl1_ME}
In this  section, we demonstrate the theoretical  results derived above using
  an  important mathematical  model, namely, the master equation with time-varying transition rates. Let~$\mathds{1}_m \in\R^m$   denote the vector of all ones.

Consider a system that at each time~$t$
can be in one of~$n$ possible configurations. Let~$z(t)=\begin{bmatrix}
    z_1(t)&\dots &z_n(t)
\end{bmatrix}^T$, where~$z_i(t) \in[0,1]$ 
is the probability that the system is in configuration number~$i$ at time~$t$. Thus,
$\mathds{1}_n^T(t) z(t)=1$ for all~$t\geq 0$. 
The (time-dependent) master equation describes the flow between the possible configurations as the linear time-varying system: 
\begin{align}\label{eq:MasterEqODEs1new}
\dot{z}(t) =  A(u(t))z(t),
\end{align}
with 
\begin{equation}\label{eq:AdefMaster}
A_{i,j}(u(t)) = \begin{cases}
u_{j,i}(t),& j\neq i,\\
-\sum_{k\neq i}u_{i,k}(t),& j=i.
\end{cases}
\end{equation}
Here~$u_{j,i}(t)$, with~$i,j\in[n]$,
is the rate of transition
from configuration~$j$
to configuration~$i$ at time~$t$. Note that the mapping~$u\to A(u)$ is linear. 

For example, for~$n=2$ we have
\be\label{eq:master_eqn2}
\dot z=A(u)z=\begin{bmatrix}
    -u_{1,2}& u_{2,1 }\\
    u_{1,2} & -u_{2,1}
\end{bmatrix}z.
\ee
The first equation here is~$\dot z_1=-u_{1,2}z_1+u_{2,1}z_2$, that is, the change in the probability of being in the first configuration is equal to
 sum of flows from configuration~$1$ to~$2$ ($-u_{1,2}z_1$), 
and from configuration~$2$ to~$1$ ($u_{2,1}z_2$).

The master equation has been used to model and analyze numerous systems and processes in a variety of scientific fields including physics, systems biology, demographics, epidemiology, chemistry, and more, see, e.g.,  the monographs~\cite{haag2017modelling,kampen}. In many of these applications, it is important to consider the case where the   time-varying transitions rates are~$T$-periodic. 

Since $z(t)$ is a probability vector, the state space of~\eqref{eq:MasterEqODEs1new} is the standard $n$-simplex in $\mathbb{R}^n$:
\begin{equation}\label{eq:OmegaStateSpace}
 {K}  
= \left\{y\in [0,1]^n \st \sum_{i=1}^n y_i = 1\right\}.    
\end{equation}

To study the GOE in the $T$-periodic 
master equation, we slightly modify the space of controls. Let $\mathcal{Q}\subseteq [n]^2$ such that $u_{i,j}(t)>0$ for all $(i,j)\in \mathcal{Q}$ and $t\in [0,T]$ implies that $A(u(t))$ is irreducible,  and define $Z$ to be the Banach space satisfying
\begin{enumerate}
    \item $(i,j)\in \mathcal{Q} \Longrightarrow u_{i,j}(0)=u_{i,j}(T),\ u_{i,j}\in C([0,T],\mathbb{R})$.
    \item $(i,j)\notin \mathcal{Q} \Longrightarrow u_{i,j}\equiv 0$
\end{enumerate}
with the norm 
\begin{equation*}
    \left\|u \right\|_Z = \max_{(i,j)\in \mathcal{Q}}\left\|u_{i,j} \right\|_C.
\end{equation*}
We  assume that the set of admissible controls is
\be \label{eq:control_in_ME}
\mathcal{U} :  = \left\{ u \in Z \st    c<u_{i,j}(t)
\ \text{ for all } t\in [0,T],\ (i,j)\in \mathcal{Q} \right\},
\ee
 with~$c>0$.  
 Note that this implies that~$A(u(t)) $ is Metzler and irreducible. 


 We also 
assume  that~\eqref{eq:MasterEqODEs1new} 
admits a scalar output
\[
 y(t) = p(u(t),z(t)),
\]
where
  $p:\mathbb{R}^{n^2}\times \mathbb{R}^n\rightarrow \mathbb{R}$ is continuously differentiable.

Define~$H:\R^n\to \R$ by~$H(y) := \mathds{1}_n^T y$. For $q\in \mathbb{R}$, let
\begin{equation}\label{eq:LevSetH}
    L_H(q) := \left\{w\in \mathbb{R}^n \st H(w) = q \right\},
\end{equation}
that is, the level set of $H$ corresponding to $q$. The function $H$ is a first integral for \eqref{eq:MasterEqODEs1new}, meaning that  for any $q\in \mathbb{R}$, the level set $L_H(q)$ is invariant under the dynamics of \eqref{eq:MasterEqODEs1new}. 

\subsection{Representing the master equation as a bilinear control system}
Eq.~\eqref{eq:MasterEqODEs1new} is not in the form~\eqref{eq:SystParam}.
Furthermore, the state space~{$K$} 
of the master equation satisfies~${K}  
\subset L_H(1)$, and since~$L_H(1)$ is an affine manifold in~$\mathbb{R}^n$, we cannot immediately apply the theoretical results in  Section \ref{section:main_results} to \eqref{eq:MasterEqODEs1new}. 

To represent the master equation in the form~\eqref{eq:SystParam}, we make two modifications. First,   introduce the parallel shift~$\kappa:\R^n\to\R^n$ defined by
\begin{equation}\label{eq:KappaTransf}
 \kappa(w) := w-n^{-1}\mathds{1}_n,
\end{equation}
and the change of variables
\begin{equation*}
x(t) := \kappa(z(t)).
\end{equation*}
In the new coordinates,  the master equation  becomes 
\begin{align}\label{eq:MasterEqODEs2new}
\dot{x}(t) &=  A(u(t))x(t)+n^{-1}A(u(t))\mathds{1}_n,\nonumber \\
y(t)& =h(u(t),x(t))  ,
\end{align}
with~$h(u,x):=p(u,x+n^{-1}\mathds{1}_n)$. 
 Note that~$h $ is continuously differentiable on $\mathbb{R}^{n^2}\times \mathbb{R}^n$.

The state space of~\eqref{eq:MasterEqODEs2new}
is~$\kappa(\TK{K})\subseteq L_H(0)$, 
and~$L_H(0)=(\operatorname{span}(\mathds{1}_n))^{\perp}$ is a linear subspace  that is trivially diffeomorphic to~$\mathbb{R}^{n-1}$.
Second, let
\[
u:=\begin{bmatrix}
    u_{1,1}&u_{1,2}&\dots & u_{1,n}&u_{2,1}&\dots& u_{n,n}
\end{bmatrix}^T,
\]
and define~$f:{\color{violet} \mathbb{R}^{n^2}} \to \R^m$ by
\be\label{eq:defu_master}
f(u):=n^{-1}A(u)\mathds{1}_n .
\ee
Also, let~$\alpha^1,\dots,\alpha^n $ denote the standard basis in~$\R^n$. 
Define a matrix~$M:\R^n\to \R^{n\times n^2}$ as follows. For any~$i\in[n]$, 
the~$i$th row of~$M(x) $ is
\be\label{eq:def_Mx_master}
 \begin{bmatrix}
 M_{i,1}(x)& \dots & M_{i,n^2}(x)
     \end{bmatrix}=
 \begin{bmatrix}  ( c^{i,1}(x) )^T& \dots & (c^{i,n}(x) )^T
\end{bmatrix},
\ee
where every~$c^{i,j}$ is a   vector of length~$n$ defined  by
\[
c^{i,j}(x) := \begin{cases}
x_j \alpha^i, & j\neq i, \\
-x_i\sum_{k\neq i}\alpha^k, &j=i.
\end{cases}
\]
For example, for~$n=2$  we have
\begin{align*}
    M(x)&=\begin{bmatrix}
   ( c^{1,1} (x))^T  & ( c^{1,2}(x))^T \\
   ( c^{2,1} (x))^T& (c^{2,2}(x))^T
\end{bmatrix}\\&=
\begin{bmatrix}
     -x_1 (\alpha^2)^T  & x_2 (\alpha^1)^T \\
    x_1 (\alpha^2)^T & -x_2 (\alpha^1)^T
\end{bmatrix}\\
&=\begin{bmatrix}
    0&  -x_1   & x_2 &0 \\
    0& x_1   & -x_2 &0
\end{bmatrix}.
\end{align*}

It is straightforward to verify that~\eqref{eq:MasterEqODEs2new} can be represented as~\eqref{eq:SystParam} with $f(u)$ defined in~\eqref{eq:defu_master}, and~$M(x)$ defined in \eqref{eq:def_Mx_master}. For example, for~$n=2$, we have
\begin{align*}
    M(x)u &= \begin{bmatrix}
    0&  -x_1   & x_2 &0 \\
    0& x_1   & -x_2 &0
\end{bmatrix}\begin{bmatrix} 
u_{1,1}\\u_{1,2}\\u_{2,1}\\u_{2,2}
\end{bmatrix}\\
&= \begin{bmatrix} 
 -u_{1,2}x_1+u_{2,1} x_2\\
 u_{1,2} x_1-u_{2,1}
x_2\end{bmatrix} 
\end{align*}
(compare with~\eqref{eq:master_eqn2}).

\subsection{Application of Theorem~\ref{thm:dv}}
Once expressed in the form~\eqref{eq:SystParam}, we now show that the general assumptions of Section~\ref{assumptions} hold for the master equation. We begin with the definition of the state space
\begin{align*}
 \Omega 
= \kappa \left( \left\{y\in (0,1)^n \st \sum_{i=1}^n y_i = 1\right\}   \right)
\end{align*}
which we consider as an open subset of $\mathbb{R}^{n-1}$. Similarly, the set of admissible controls $\mathcal{U}$ defined in~\eqref{eq:control_in_ME} is an open subset of~$Z$ after omitting all the components~$u_{i,j}$ that vanish identically. Moreover, condition~{\bf (C1)} is trivially satisfied. Next we recall two 
properties of the time-varying irreducible
master equation (see, e.g.,~\cite{entrainME}). First, Theorem~2.5 in~\cite{entrainME} states in a more general setting the unique existence of a $T$-periodic solution for every admissible control which settles condition~{\bf (C2)}. Second, the combination of Corollary A.6 and Proposition A.7 in~\cite{entrainME} shows that~$\frac{\partial x}{\partial x_0} (T;u,\Gamma(u))=\Phi_{u, \Gamma(u)}(T)$ of condition~{\bf (C3)} does not map any non-zero vector in~$L_H(0)$ to a vector of the same length. Here~$L_H(0)$ is equipped with the norm that is induced by the $\ell_1$-norm of the ambient space~$\mathbb{R}^n$. Thus~$\frac{\partial x}{\partial x_0} (T;u,\Gamma(u))$ cannot have an eigenvalue equal to~$1$. Theorem~\ref{thm:dv} therefore implies
\begin{corollary}[First-order GOE for a constant control in the master equation is zero]\label{Cor:Master}
Consider the master equation
given by~\eqref{eq:MasterEqODEs1new}, \eqref{eq:AdefMaster}, with  the set of~$T$-periodic controls~\eqref{eq:control_in_ME}.
Fix a constant control~$u(t)\equiv \bar v$, and let~$e^{\bar v}$ denote the corresponding equilibrium point. 
For~$\Delta u \in Z$ such that~$u+\Delta u\in \mathcal{U}$,  and~$\overline{\Delta u}:= \frac{1}{T}\int_0^T \Delta u(t)  \diff t=0$,
let~$\gamma^{u+\Delta u}$ denote the corresponding $T$-periodic solution. Then the average outputs along the $T$-periodic 
solutions $y^u(t) := p\left (\bar{v},e^{\bar v}\right )$ and $y^{u+\Delta u}(t) :=p\bigl (u(t)+\Delta u(t),{\gamma}^{u+\Delta u}(t)\bigl)$ satisfy \eqref{eq:GOE1stZero}. 
\end{corollary}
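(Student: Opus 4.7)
The plan is to reduce Corollary~\ref{Cor:Master} to a direct application of Theorem~\ref{thm:dv} on the reformulated system~\eqref{eq:MasterEqODEs2new}, which is already in the bilinear form~\eqref{eq:SystParam} with~$f(u)=n^{-1}A(u)\mathds{1}_n$ and~$M(x)$ as in~\eqref{eq:def_Mx_master}. Since the original output~$y(t)=p(u(t),z(t))$ equals~$h(u(t),x(t))=p(u(t),x(t)+n^{-1}\mathds{1}_n)$ along every trajectory of the shifted system, the first-order statement provided by Theorem~\ref{thm:dv} translates verbatim into~\eqref{eq:GOE1stZero} for the master equation, once the three general assumptions are verified.

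First I would check condition~\textbf{(C1)}: the map~$u\mapsto f(u)$ is linear in~$u$, the entries of~$M(x)$ are linear in~$x$, and~$h$ is the composition of the continuously differentiable function~$p$ with an affine map, so all three functions appearing in~\eqref{eq:MasterEqODEs2new} are continuously differentiable on their respective domains. Next, for~\textbf{(C2)} I would appeal to the known entrainment result for the irreducible master equation (e.g.\ Theorem~2.5 of~\cite{entrainME}), which guarantees for every admissible~$u\in\mathcal{U}$ the existence of a unique $T$-periodic solution lying in the relative interior of the simplex; translated by~$\kappa$ this becomes a unique $T$-periodic solution in the open set~$\Omega=\kappa(\mathrm{Relint}(K))$, as required.

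For condition~\textbf{(C3)} I would invoke Corollary~A.6 and Proposition~A.7 of~\cite{entrainME}, which together show that the linearization~$\Phi_{u,\Gamma(u)}(T;0)$ restricted to the invariant linear subspace~$L_H(0)$ is a strict contraction in the~$\ell_1$-norm over one period. Consequently its spectrum on~$L_H(0)$ lies strictly inside the open unit disk, so~$1$ cannot be an eigenvalue. Having verified \textbf{(C1)}--\textbf{(C3)}, Theorem~\ref{thm:dv} applied to the $x$-system with the constant control~$\bar v$ (whose equilibrium in the $x$-coordinates is~$\kappa(e^{\bar v})$) yields~\eqref{eq:GOE1stZero} for the pair~$(y^u,y^{u+\Delta u})$ defined via~$h$, which is exactly the pair defined via~$p$ in the statement.

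The main obstacle is the dimensional reduction from~$\mathbb{R}^n$ to the leaf~$L_H(1)$: viewed on all of~$\mathbb{R}^n$, the matrix~$A(u(t))$ has~$\mathds{1}_n^\top$ in its left kernel, so the transition matrix of the full linearized system always has~$1$ as an eigenvalue with eigenvector along~$\mathds{1}_n$, and~\textbf{(C3)} cannot hold on the ambient space. The correct interpretation is to work on the invariant linear subspace~$L_H(0)\cong \mathbb{R}^{n-1}$ obtained after the shift~$\kappa$: identifying~$L_H(0)$ with~$\mathbb{R}^{n-1}$ by any fixed linear isomorphism, rewriting the state equation in those coordinates, and then invoking~\cite{entrainME} for the non-degeneracy of the reduced Poincar\'e linearization. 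Once this reduction is carefully spelled out, the remainder of the argument is a verbatim citation of Theorem~\ref{thm:dv}.
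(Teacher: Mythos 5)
Your proposal is correct and follows essentially the same route as the paper: rewrite the master equation in the form~\eqref{eq:SystParam} via the shift~$\kappa$, verify~{\bf (C1)} directly, cite Theorem~2.5 of~\cite{entrainME} for~{\bf (C2)}, cite Corollary~A.6 and Proposition~A.7 of~\cite{entrainME} to rule out the eigenvalue~$1$ for the Poincar\'e linearization restricted to~$L_H(0)\cong\mathbb{R}^{n-1}$ (condition~{\bf (C3)}), and then apply Theorem~\ref{thm:dv}. (Only a minor slip: on the ambient space~$\mathbb{R}^n$ the vector~$\mathds{1}_n$ is a \emph{left} eigenvector of the transition matrix with eigenvalue~$1$, the corresponding right eigenvector need not be along~$\mathds{1}_n$; this does not affect your, and the paper's, restriction to~$L_H(0)$.)
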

In other words, in the vicinity of
a constant control~$u(t)\equiv \bar{v}$, the GOE in the master equation is a phenomenon of second-order (or higher).

\section{Generalizing the  control system}\label{sec:generalizations}


The form of the control system~\eqref{eq:SystParam} 
was motivated by the~RFM and by Markov chains. Consider now the more general control system:
\begin{align}\label{eq:more_gen_sys}
\dot{x}(t)& =  g(x(t)) +  f(u(t))+ M(x(t)) u(t)  , \\
 y(t)&= h(u(t),x(t)) ,\nonumber
\end{align}
where we added a  state-dependent drift term~$g: \Omega'\rightarrow \mathbb{R}^n$ that is assumed to be~$C^1$. We now show that this can be represented as in~\eqref{eq:SystParam}. Introduce a new control input~$u_0(t)\equiv 1$, and define
\begin{align*}
\hat u: =   \begin{bmatrix}
    u_0\\u
\end{bmatrix}, \ 
\hat M
:= \begin{bmatrix}
g & M    
\end{bmatrix}, \text{ and } 
\Psi := \begin{bmatrix}
    0&1&0&\dots & 0 &0\\
    & & & \vdots & & &\\
    0&0& 0 & \dots & 0  & 1
\end{bmatrix}\in \R^{m\times(m+1)}.
\end{align*}
Then~\eqref{eq:more_gen_sys} can be expressed as 
\begin{align}\label{eq:new_form}
\dot x(t) &= g(x(t)) +  f(u(t))+ M(x(t)) u(t)\nonumber \\
&= f\left(\Psi \hat u(t) \right)+ 
\hat M(x(t)) \hat u(t),
\end{align}
and this is in  the form~\eqref{eq:SystParam} that is analyzed in this paper.

\begin{Example}
Consider again the system in Example~\ref{exa:nonlin_with_goe}, that is, 
    \begin{align*}
    \dot x_1&=-x_1+x_2^2,\nonumber\\
    \dot x_2&=-x_2+u,\nonumber\\
    y&=x_1.
    \end{align*}
This is not in the form~\eqref{eq:SystParam}, but it is in the form~\eqref{eq:more_gen_sys}
with
\[
g(x):=\begin{bmatrix}-x_1+x_2^2 \\-x_2\end{bmatrix}, \; f(u):=\begin{bmatrix} 0\\u \end{bmatrix},\text{ and } 
M(x):=0.
\]
Defining~$u_0(t)\equiv 1$,
\begin{align*}
\hat u: =   \begin{bmatrix}
    u_0\\u
\end{bmatrix}, \ 
\hat M
:= \begin{bmatrix}
 -x_1+x_2^2&0\\
 -x_2 &0
\end{bmatrix}, \text{ and } 
\Psi := \begin{bmatrix}
    0&1 
\end{bmatrix},
\end{align*}
implies that we can write this example in the form~\eqref{eq:new_form}.
In particular, all the theoretical results in Section~\ref{section:main_results} are valid for this example, and this explains why the term for the GOE in~\eqref{eq:quad_goe} is quadratic in the perturbation amplitude~$a$. 
\end{Example}

\section{Conclusion}
Many natural and artificial systems are or can be regulated using periodic controls.
A natural question is: can  periodic controls lead, on average, to a better performance than constant controls? 
Since periodic controls include, as a special case, constant controls, it may seem that the answer to this question is typically yes.

The notion of GOE allows to formulate and analyze this question rigorously. The two key aspects  are: (1)~the controlled system   is assumed to entrain,  so that under a $T$-periodic control all its solutions converge to a unique $T$-periodic solution, and (2)~the comparison between constant and $T$-periodic controls is ``fair'' in the sense that the average value of the control is fixed.

Analysis of the GOE is non-trivial due to several reasons. First, the $T$-periodic solution of the system is usually not known explicitly. Second, the analysis requires to compute the  derivative of the state with respect to a small perturbation in
the control, and this
implies that a general treatment of this  problem is intrinsically  infinite-dimensional.

Here, we studied the GOE in a    class of   systems affine in the control. The main assumption is that the controlled system is contractive. This implies entrainment to~$T$-periodic controls. 
In fact, the  analysis shows that entrainment is almost a sufficient condition to derive our results. It only needs to be supplemented by some condition on the sensitivity function of the Poincar\'e map that is satisfied, e.g.~by some weak form of contractivity as it is present in the master equation for irreducible Markov processes.

We showed that given a constant control~$u(t)\equiv \bar v$
and a $T$-periodic perturbation~$\Delta u$ (with average~$T^{-1}\int_0^T \Delta u(t)\diff t=0$), the first-order term of the~GOE is zero.
The proof is based on  analysis of the associated Poincar\'e map.

Our results suggest that 
certain systems may have a GOE that is \emph{always}  positive [negative]   corresponding to certain ``convexity'' [``concavity''] like  properties of the second derivative of the state w.r.t.  perturbations of the control. In such systems  periodic controls will \emph{always} be better [worse] than constant controls, on average. 
Determining the structure of such systems is an interesting and non-trivial research problem. 

We demonstrated our results using the master equation and  a  phenomenological model for 1D transportation called the ribosome flow model~(RFM), both with~$T$-periodic rates. 
We believe that the results hold for other  examples of contractive  systems as well. 

\subsection*{Acknowledgements}  We thank Alexander Ovseevich   for helpful comments.

 \onehalfspacing


\begin{thebibliography}{10}
\providecommand{\url}[1]{#1}
\csname url@samestyle\endcsname
\providecommand{\newblock}{\relax}
\providecommand{\bibinfo}[2]{#2}
\providecommand{\BIBentrySTDinterwordspacing}{\spaceskip=0pt\relax}
\providecommand{\BIBentryALTinterwordstretchfactor}{4}
\providecommand{\BIBentryALTinterwordspacing}{\spaceskip=\fontdimen2\font plus
\BIBentryALTinterwordstretchfactor\fontdimen3\font minus
  \fontdimen4\font\relax}
\providecommand{\BIBforeignlanguage}[2]{{%
\expandafter\ifx\csname l@#1\endcsname\relax
\typeout{** WARNING: IEEEtranS.bst: No hyphenation pattern has been}%
\typeout{** loaded for the language `#1'. Using the pattern for}%
\typeout{** the default language instead.}%
\else
\language=\csname l@#1\endcsname
\fi
#2}}
\providecommand{\BIBdecl}{\relax}
\BIBdecl

\bibitem{max_period_output_RFM}
M.~Ali Al-Radhawi, M.~Margaliot, and E.~D. Sontag, ``Maximizing average
  throughput in oscillatory biochemical synthesis systems: an optimal control
  approach,'' \emph{Royal Society Open Science}, vol.~8, no.~9, p. 210878,
  2021.

\bibitem{sontag_cotraction_tutorial}
Z.~Aminzare and E.~D. Sontag, ``Contraction methods for nonlinear systems: A
  brief introduction and some open problems,'' in \emph{{Proc.\ 53rd IEEE Conf.
  on Decision and Control}}, Los Angeles, CA, 2014, pp. 3835--3847.

\bibitem{EYAL_RFMD1}
E.~Bar-Shalom, A.~Ovseevich, and M.~Margaliot, ``Ribosome flow model with
  different site sizes,'' \emph{{SIAM} J. Applied Dynamical Systems}, vol.~19,
  no.~1, pp. 541--576, 2020.

\bibitem{rapo_convex}
T.~Bayen, A.~Rapaport, , and F.-Z. Tani, ``Optimal periodic control for scalar
  dynamics under integral constraint on the input,'' \emph{Math. Control Relat.
  Fields}, vol.~10, no.~3, pp. 547--571, 2020.

\bibitem{peri_fishing}
A.~O. Belyakov and V.~Veliov, ``Constant versus periodic fishing: age
  structured optimal control approach,'' \emph{Math. Model. Nat. Phenom.},
  vol.~9, p. 20–37, 2014.

\bibitem{solvers_guide}
R.~A. Blythe and M.~R. Evans, ``Nonequilibrium steady states of matrix-product
  form: a solver's guide,'' \emph{J. Phys. A: Math. Theor.}, vol.~40, no.~46,
  pp. R333--R441, 2007.

\bibitem{bullo_contractive_systems}
F.~Bullo, \emph{Contraction Theory for Dynamical Systems}.\hskip 1em plus 0.5em
  minus 0.4em\relax Kindle Direct Publishing, 2022.

\bibitem{sync_minds2023}
L.~Denworth, ``Synchronized minds,'' \emph{Scientific American}, vol. 329,
  no.~1, pp. 50--57, 2023.

\bibitem{desoer}
C.~Desoer and H.~Haneda, ``The measure of a matrix as a tool to analyze
  computer algorithms for circuit analysis,'' \emph{IEEE Trans. Circuit
  Theory}, vol.~19, no.~5, pp. 480--486, 1972.

\bibitem{edwards2012advanced}
C.~H. Edwards, \emph{Advanced Calculus of Several Variables}.\hskip 1em plus
  0.5em minus 0.4em\relax Courier Corporation, 2012.

\bibitem{peri_cell_cycle}
P.~Eser, C.~Demel, K.~C. Maier, B.~Schwalb, N.~Pirkl, D.~E. Martin, P.~Cramer,
  and A.~Tresch, ``Periodic {mRNA} synthesis and degradation co-operate during
  cell cycle gene expression,'' \emph{Molecular Systems Biology}, vol.~10,
  no.~1, p. 717, 2014.

\bibitem{Frenkel2012}
M.~Frenkel-Morgenstern, T.~Danon, T.~Christian, T.~Igarashi, L.~Cohen, Y.-M.
  Hou, and L.~J. Jensen, ``Genes adopt non-optimal codon usage to generate cell
  cycle-dependent oscillations in protein levels,'' \emph{Molecular Systems
  Biology}, vol.~8, no.~1, p. 572, 2012.

\bibitem{rapoport_sing}
T.~Guilmeau and A.~Rapaport, ``Singular arcs in optimal periodic control
  problems with scalar dynamics and integral input constraint,'' \emph{J.
  Optimization Theory and Applications}, vol. 195, pp. 548--574, 2022.

\bibitem{haag2017modelling}
G.~Haag, \emph{Modelling with the Master Equation: Solution Methods and
  Applications in Social and Natural Sciences}.\hskip 1em plus 0.5em minus
  0.4em\relax Cham, Switzerland: Springer, 2017.

\bibitem{HALTER2017267}
W.~Halter, J.~M. Montenbruck, Z.~A. Tuza, and F.~Allgower, ``A resource
  dependent protein synthesis model for evaluating synthetic circuits,''
  \emph{J. Theoretical Biology}, vol. 420, pp. 267--278, 2017.

\bibitem{Hasty2002}
J.~Hasty, M.~Dolnik, V.~Rottschäfer, and J.~J. Collins, ``Synthetic gene
  network for entraining and amplifying cellular oscillations,'' \emph{Phys.
  Rev. Lett.}, vol.~88, pp. {148\,101--1}--{148\,101--4}, 2002.

\bibitem{Higareda2010}
A.~E. Higareda-Mendoza and M.~A. Pardo-Galvan, ``Expression of human eukaryotic
  initiation factor {3f} oscillates with cell cycle in {A549} cells and is
  essential for cell viability,'' \emph{Cell Div.}, vol.~5, no.~10, 2010.

\bibitem{entrain_arabid}
R.~A. Ingle, C.~Stoker, W.~Stone, N.~Adams, R.~Smith, M.~Grant, I.~Carre, L.~C.
  Roden, and K.~J. Denby, ``Jasmonate signalling drives time-of-day differences
  in susceptibility of {Arabidopsis} to the fungal pathogen {Botrytis}
  cinerea,'' \emph{The Plant Journal}, vol.~84, no.~5, pp. 937--948, 2015.

\bibitem{Aditi_abortions}
A.~Jain and A.~Gupta, ``Modeling {mRNA} translation with ribosome abortions,''
  \emph{IEEE/ACM Trans. on Computational Biology and Bioinformatics}, vol.~20,
  no.~2, pp. 1600--1605, 2023.

\bibitem{aditi_networks}
A.~Jain, M.~Margaliot, and A.~K. Gupta, ``Large-scale {mRNA} translation and
  the intricate effects of competition for the finite pool of ribosomes,''
  \emph{J. R. Soc. Interface}, vol.~19, p. 2022.0033, 2022.

\bibitem{katriel2019optimality}
G.~Katriel, ``Optimality of constant arrival rate for a linear system with a
  bottleneck entrance,'' \emph{Systems Control Lett.}, vol. 138, p. 104649,
  2020.

\bibitem{fierce_compete}
R.~Katz, E.~Attias, T.~Tuller, and M.~Margaliot, ``Translation in the cell
  under fierce competition for shared resources: a mathematical model,''
  \emph{J. R. Soc. Interface}, vol.~19, p. 20220535, 2022.

\bibitem{kriecherbauer_krug2010}
T.~Kriecherbauer and J.~Krug, ``A pedestrian's view on interacting particle
  systems, \mbox{KPZ} universality, and random matrices,'' \emph{J. Phys. A:
  Math. Theor.}, vol.~43, p. 403001, 2010.

\bibitem{LOHMILLER1998683}
W.~Lohmiller and J.-J.~E. Slotine, ``On contraction analysis for non-linear
  systems,'' \emph{Automatica}, vol.~34, pp. 683--696, 1998.

\bibitem{entrainME}
M.~Margaliot, L.~Grune, and T.~Kriecherbauer, ``Entrainment in the master
  equation,'' \emph{Royal Society open science}, vol.~5, no.~4, p. 172157,
  2018.

\bibitem{randon_rfm}
M.~Margaliot, W.~Huleihel, and T.~Tuller, ``Variability in {mRNA} translation:
  a random matrix theory approach,'' \emph{Sci. Rep.}, vol.~11, 2021.

\bibitem{RFM_entrain}
M.~Margaliot, E.~D. Sontag, and T.~Tuller, ``Entrainment to periodic initiation
  and transition rates in a computational model for gene translation,''
  \emph{PLoS ONE}, vol.~9, no.~5, p. e96039, 2014.

\bibitem{margaliot2012stability}
M.~Margaliot and T.~Tuller, ``Stability analysis of the ribosome flow model,''
  \emph{IEEE/ACM Trans. Computational Biology and Bioinformatics}, vol.~9,
  no.~5, pp. 1545--1552, 2012.

\bibitem{rfm_feedback}
------, ``Ribosome flow model with positive feedback,'' \emph{J. R. Soc.
  Interface}, vol.~10, no.~85, p. 20130267, 2013.

\bibitem{cast_book}
M.~Margaliot, T.~Tuller, and E.~D. Sontag, ``Checkable conditions for
  contraction after small transients in time and amplitude,'' in \emph{Feedback
  Stabilization of Controlled Dynamical Systems: In Honor of {Laurent}
  {Praly}}, N.~Petit, Ed.\hskip 1em plus 0.5em minus 0.4em\relax Cham,
  Switzerland: Springer International Publishing, 2017, pp. 279--305.

\bibitem{seasonal_epidemics}
M.~E. Martinez, ``The calendar of epidemics: Seasonal cycles of infectious
  diseases,'' \emph{PLoS Pathog.}, vol.~14, no.~11, p. e1007327, 2018.

\bibitem{Ortho_RFM}
J.~Miller, M.~Al-Radhawi, and E.~Sontag, ``Mediating ribosomal competition by
  splitting pools,'' \emph{IEEE Control Systems Letters}, vol.~5, pp.
  1555--1560, 2020.

\bibitem{nani}
I.~Nanikashvili, Y.~Zarai, A.~Ovseevich, T.~Tuller, and M.~Margaliot,
  ``Networks of ribosome flow models for modeling and analyzing intracellular
  traffic,'' \emph{Sci. Rep.}, vol.~9, no.~1, 2019.

\bibitem{GOE_RFM_2023}
R.~Ofir, T.~Kriecherbauer, L.~Grüne, and M.~Margaliot, ``On the gain of
  entrainment in the $n$-dimensional ribosome flow model,'' \emph{J. R. Soc.
  Interface}, vol.~20, p. 20220763, 2023.

\bibitem{ofir2021sufficient}
R.~Ofir, M.~Margaliot, Y.~Levron, and J.-J. Slotine, ``A sufficient condition
  for $k$-contraction of the series connection of two systems,'' \emph{IEEE
  Trans.\ Automat.\ Control}, vol.~67, no.~9, pp. 4994--5001, 2022.

\bibitem{patil2012}
A.~Patil, M.~Dyavaiah, F.~Joseph, J.~P. Rooney, C.~T. Chan, P.~C. Dedon, and
  T.~J. Begley, ``Increased {tRNA} modification and gene-specific codon usage
  regulate cell cycle progression during the {DNA} damage response,''
  \emph{Cell Cycle}, vol.~11, no.~19, pp. 3656--65, 2012.

\bibitem{pavlov}
A.~Pavlov, N.~van~de Wouw, and H.~Nijmeijer, ``Frequency response functions for
  nonlinear convergent systems,'' \emph{IEEE Trans.\ Automat.\ Control},
  vol.~52, no.~6, pp. 1159--1165, 2007.

\bibitem{pazy_semigroups}
A.~Pazy, \emph{Semigroups of Linear Operators and Applications to Partial
  Differential Equations}, ser. Applied Mathematical Sciences.\hskip 1em plus
  0.5em minus 0.4em\relax New York, NY: Springer, 1983, vol.~44.

\bibitem{rfm_max}
G.~Poker, Y.~Zarai, M.~Margaliot, and T.~Tuller, ``Maximizing protein
  translation rate in the nonhomogeneous ribosome flow model: A convex
  optimization approach,'' \emph{J. R. Soc. Interface}, vol.~11, no. 100, p.
  20140713, 2014.

\bibitem{RFM_SENSE}
G.~Poker, M.~Margaliot, and T.~Tuller, ``Sensitivity of {mRNA} translation,''
  \emph{Sci. Rep.}, vol.~5, no. 12795, 2015.

\bibitem{Raveh2016}
A.~Raveh, M.~Margaliot, E.~Sontag, and T.~Tuller, ``A model for competition for
  ribosomes in the cell,'' \emph{J. R. Soc. Interface}, vol.~13, no. 116, p.
  20151062, 2016.

\bibitem{rfmr_2015}
A.~Raveh, Y.~Zarai, M.~Margaliot, and T.~Tuller, ``Ribosome flow model on a
  ring,'' \emph{IEEE/ACM Transactions on Computational Biology and
  Bioinformatics}, vol.~12, no.~6, pp. 1429--1439, 2015.

\bibitem{reuveni2011genome}
S.~Reuveni, I.~Meilijson, M.~Kupiec, E.~Ruppin, and T.~Tuller, ``Genome-scale
  analysis of translation elongation with a ribosome flow model,'' \emph{PLOS
  Computational Biology}, vol.~7, no.~9, p. e1002127, 2011.

\bibitem{rheinboldt1969local}
W.~C. Rheinboldt, ``Local mapping relations and global implicit function
  theorems,'' \emph{Transactions of the American Mathematical Society}, vol.
  138, pp. 183--198, 1969.

\bibitem{entrain_trans}
G.~Russo, M.~di~Bernardo, and E.~D. Sontag, ``Global entrainment of
  transcriptional systems to periodic inputs,'' \emph{PLOS Computational
  Biology}, vol.~6, no.~4, pp. 1--26, 04 2010.

\bibitem{RUFFER2013277}
B.~S. Rüffer, N.~{van de Wouw}, and M.~Mueller, ``Convergent systems vs.
  incremental stability,'' \emph{Systems Control Lett.}, vol.~62, no.~3, pp.
  277--285, 2013.

\bibitem{TASEP_book}
A.~Schadschneider, D.~Chowdhury, and K.~Nishinari, \emph{Stochastic Transport
  in Complex Systems: From Molecules to Vehicles}.\hskip 1em plus 0.5em minus
  0.4em\relax Elsevier, 2011.

\bibitem{Periodic_Chemical_Reactors}
P.~Silveston and R.~R. Hudgins, \emph{Periodic Operation of Chemical
  Reactors}.\hskip 1em plus 0.5em minus 0.4em\relax Butterworth-Heinemann,
  2013.

\bibitem{Sontag2010}
E.~D. Sontag, ``Contractive systems with inputs,'' in \emph{Perspectives in
  Mathematical System Theory, Control, and Signal Processing: A Festschrift in
  Honor of Yutaka Yamamoto on the Occasion of his 60th Birthday}, J.~C.
  Willems, S.~Hara, Y.~Ohta, and H.~Fujioka, Eds.\hskip 1em plus 0.5em minus
  0.4em\relax Berlin, Heidelberg: Springer, 2010, pp. 217--228.

\bibitem{sontag2013mathematical}
------, \emph{Mathematical control theory: deterministic finite dimensional
  systems}.\hskip 1em plus 0.5em minus 0.4em\relax Springer Science \& Business
  Media, 2013, vol.~6.

\bibitem{soper}
H.~E. Soper, ``The interpretation of periodicity in disease prevalence,''
  \emph{J. Roy. Stat. Sot.}, vol.~92, pp. 34--61, 1929.

\bibitem{SRINIVASAN2023203}
A.~Srinivasan and J.-J. Slotine, ``Contracting differential equations in
  weighted {Banach} spaces,'' \emph{J. Diff. Eqns.}, vol. 344, pp. 203--229,
  2023.

\bibitem{strom1975logarithmic}
T.~Str{\"o}m, ``On logarithmic norms,'' \emph{SIAM J. Numerical Analysis},
  vol.~12, no.~5, pp. 741--753, 1975.

\bibitem{RFM_FROM_PDE}
G.~Szederkenyi, B.~Acs, G.~Liptak, and M.~A. Vaghy, ``Persistence and stability
  of a class of kinetic compartmental models,'' \emph{J. Math. Chemistry},
  vol.~60, no.~4, pp. 1001--1020, 2022.

\bibitem{kampen}
N.~G. {Van Kampen}, \emph{Stochastic Processes in Physics and Chemistry},
  3rd~ed.\hskip 1em plus 0.5em minus 0.4em\relax Amsterdam: Elsevier, 2007.

\bibitem{alexander2017}
Y.~Zarai, A.~Ovseevich, and M.~Margaliot, ``Optimal translation along a
  circular {mRNA},'' \emph{Sci. Rep.}, vol.~7, no.~1, p. 9464, 2017.

\end{thebibliography}

\end{document}